\newtheorem{theorem}{Theorem}[section]
\newtheorem{lemma}[theorem]{Lemma}
\newtheorem{proposition}[theorem]{Proposition}
\newtheorem{corollary}[theorem]{Corollary} 
\newtheorem*{definition}{Definition}
\newtheorem*{example}{Example}
\newtheorem*{acknowledgement}{Acknowledgement}
\newtheorem*{thmA}{Theorem 3.5}
\newtheorem*{thmB}{Theorem 3.10}
\numberwithin{equation}{section}
\begin{document}
\title{Groups with the same cohomology as their pro-$p$ completions}
\author{
{\sc Karl Lorensen}\\
\\
Mathematics Department\\
Pennsylvania State University, Altoona College\\
3000 Ivyside Park\\
Altoona, PA 16601-3760\\
USA\\
e-mail: {\tt kql3@psu.edu}
}

\maketitle

\begin{abstract} For any prime $p$ and group $G$, denote the pro-$p$ completion of $G$ by $\hat{G}^p$.  Let $\mathcal{C}$ be the class of all groups $G$ such that, for each natural number $n$ and prime number $p$, $H^n(\hat{G^p},\mathbb Z/p)\cong H^n(G, \mathbb Z/p)$, where $\mathbb Z/p$ is viewed as a discrete, trivial $\hat{G}^p$-module. In this article we identify certain kinds of groups that lie in $\mathcal{C}$. In particular, we show that right-angled Artin groups are in $\mathcal{C}$ and that this class also contains some special types of free products with amalgamation.

\vspace{12pt}

\noindent {\bf Mathematics Subject Classification (2000)}:  20JO6, 20E18, 20E06, 20F36 
\end{abstract}

\setcounter{section}{-1}

\section{Introduction}

\indent

If $G$ is a group and $p$ a prime number, then $\hat{G}^p$ will denote the pro-$p$ completion of $G$ and $c_G^p:G\to \hat{G}^p$ the completion map.  Let $\mathcal{C}$ be the class of groups $G$ 
such that, for every prime $p$ and nonnegative integer $n$, the homomorphism induced by $c_G^p$ from the continuous cohomology group $H^n(\hat{G}^p,\mathbb Z/p)$ to the discrete cohomology group $H^n(G,\mathbb Z/p)$ is an isomorphism, where $\mathbb Z/p$ is viewed as a discrete, trivial $\hat{G}^p$-module. This class of groups has recently piqued the interest of  researchers in connection with the conjecture, originally due to M. Atiyah, that the $L^2$-Betti numbers of a finite CW-complex whose fundamental group is torsion-free are always integers; see \cite{schick}.

For any group $G$, $H^n(\hat{G}^p,\mathbb Z/p)\cong H^n(G,\mathbb Z/p)$ if $n=0,1$. However, groups for which these cohomology groups are isomorphic in higher dimensions appear to be quite rare. The most obvious examples of groups in the class $\mathcal{C}$ are free groups, since, for $G$ free, both $H^n(\hat{G}^p,\mathbb Z/p)$ and $H^n(G,\mathbb Z/p)$ are trivial for $n\geq 2$. Finitely generated nilpotent groups are also easily seen to be in this class, as we demonstrate below in Corollary 1.3. Other, more exotic, examples of such groups may be found scattered throughout the literature. For instance, in \cite{labute} it is shown that a particular species of one-relator group belongs to $\mathcal{C}$. In addition, P. Linnell and T. Schick \cite{schick} prove, using results from \cite{falk}, that certain kinds of Artin groups reside in this class. Moreover, in \cite{blomer}, Linnell and Schick, together with I. Blomer, establish that primitive link groups lie in $\mathcal{C}$. 
Additional observations concerning groups in $\mathcal{C}$ are contained in \cite{kochloukova} and \cite{weigel}, where three dimensional orientable Poincar\'e duality groups in this class are discussed.  

The goal of the present article is to identify some new types of groups in the class $\mathcal{C}$. First, in Section 2, we prove that right-angled Artin groups are in $\mathcal{C}$; these are groups with a finite presentation in which the relators are all commutators of weight 2 in the generators. 
In order to prove that such groups are in $\mathcal{C}$, we view them as being formed by a finite sequence of HNN extensions and employ the Mayer-Vietoris sequence for an HNN extension, in both its discrete and pro-$p$ incarnations. At the end of Section 2 we apply a similar approach to show that right-angled Artin groups also have another property that is often displayed by groups in $\mathcal{C}$: they are residually $p$-finite for each prime $p$.

In Section 3 we invoke the Mayer-Vietoris sequence for a free product with amalgamation to identify certain free products with amalgamated subgroup that belong to $\mathcal{C}$. Our principal results in that section are the following two theorems.

\begin{thmA} Assume $G_1$ and $G_2$ are groups with a shared finitely generated central subgroup $A$.  If  $G_1/A$ and $G_2/A$ both belong to $\mathcal{C}$, then $G_1\ast_A G_2$ is in $\mathcal{C}$.
\end{thmA}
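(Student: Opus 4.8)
The plan is to exploit centrality to reduce the theorem to a statement about central extensions. Since $A$ lies in the centre of each $G_i$ and $G:=G_1\ast_A G_2$ is generated by $G_1\cup G_2$, the subgroup $A$ is central in $G$; its normal closure is therefore $A$ itself, so $G/A\cong (G_1/A)\ast(G_2/A)$, a free product of two groups that by hypothesis lie in $\mathcal C$. I would accordingly split the argument into two parts: (i) that $\mathcal C$ is closed under free products, whence $Q:=G/A\in\mathcal C$; and (ii) a central-extension principle: if $1\to A\to G\to Q\to 1$ is a central extension with $A$ finitely generated abelian and $Q\in\mathcal C$, then $G\in\mathcal C$. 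Part (i) is the case of the Mayer--Vietoris machinery in which the amalgamated subgroup is trivial, so that $\widehat{(G/A)}^p$ is the free pro-$p$ product of the $\widehat{(G_i/A)}^p$ and both the discrete and pro-$p$ cohomologies split as direct sums over the two factors in positive degrees; a five-lemma comparison driven by $G_1/A,G_2/A\in\mathcal C$ then yields $Q\in\mathcal C$.

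For part (ii) I would compare the Lyndon--Hochschild--Serre spectral sequence of $1\to A\to G\to Q\to 1$ with that of its pro-$p$ counterpart. Right exactness of pro-$p$ completion gives an exact sequence $1\to\overline A\to\widehat G^p\to\widehat Q^p\to 1$, where $\overline A$ is the closure of the image of $A$; centrality of $A$ makes $\overline A$ central, so this is a central extension of pro-$p$ groups. Because the action of $Q$ on $H^t(A,\mathbb Z/p)$ is trivial and this group is a finite-dimensional $\mathbb Z/p$-space, the $E_2$-page is $H^s(Q,\mathbb Z/p)\otimes H^t(A,\mathbb Z/p)$, and likewise on the pro-$p$ side; the morphism of spectral sequences induced by $c_G^p$ then factors on $E_2$ through the isomorphisms $H^s(\widehat Q^p,\mathbb Z/p)\cong H^s(Q,\mathbb Z/p)$ (from $Q\in\mathcal C$) and $H^t(\widehat A^p,\mathbb Z/p)\cong H^t(A,\mathbb Z/p)$ (from Corollary 1.3, as $A$ is finitely generated nilpotent). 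An isomorphism on the $E_2$-pages of these first-quadrant spectral sequences forces one on the abutments, giving $G\in\mathcal C$.

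The hard part will be justifying the identification $\overline A=\widehat A^p$ used above, i.e. the injectivity of the map $\widehat A^p\to\widehat G^p$ induced by the inclusion $A\hookrightarrow G$; without it the coefficient row of the pro-$p$ spectral sequence is $H^t(\overline A,\mathbb Z/p)$ rather than $H^t(\widehat A^p,\mathbb Z/p)$ and the $E_2$-comparison breaks. Since $A$ is finitely generated abelian and each of its subgroups $A_0$ of $p$-power index is normal in $G$, it suffices to embed the finite central $p$-group $B:=A/A_0$ into $\widehat H^p$ for $H:=G/A_0$. To do this I would transport the class of the extension $1\to B\to H\to Q\to 1$: a d\'evissage from $\mathbb Z/p$ to $B$ upgrades $Q\in\mathcal C$ to an isomorphism $H^2(\widehat Q^p,B)\cong H^2(Q,B)$, so its class $\alpha$ lifts to a class defining a central extension $1\to B\to\widetilde H\to\widehat Q^p\to 1$ of pro-$p$ groups in which $B$ genuinely embeds; pulling $\widetilde H$ back along $c_Q^p$ recovers $H$, yielding a morphism $H\to\widetilde H$ that is the identity on $B$, and since $\widetilde H$ is pro-$p$ this factors through $\widehat H^p\to\widetilde H$, whose composite with $B\to\widehat H^p$ is the embedding. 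This transport argument, which is exactly where centrality, finite generation, and the hypothesis $Q\in\mathcal C$ are all consumed, is the crux; I note that one could instead attempt the Mayer--Vietoris sequence of the amalgam $G_1\ast_A G_2$ directly, but that route would require establishing properness of the pro-$p$ amalgam, a condition secured by the same central structure.
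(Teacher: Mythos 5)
Your proposal is correct, but it follows a genuinely different route from the paper's. The paper keeps the amalgamated structure throughout: it first gets $G_1,G_2\in\mathcal C$ from Corollaries 1.2 and 1.3, then verifies the hypothesis of Lemma 3.3 by an explicit construction (for given $N_i\unlhd_{o(p)}G_i$ it builds $P_i=UM_i$ with $P_1\cap A=P_2\cap A=U$, where $U$ is a characteristic $p$-power-index subgroup of $A$ and the $M_i$ come from Proposition 1.1), concluding that $G_1,G_2\leq_{t(p)}G_1\ast_A G_2$; this makes the pro-$p$ amalgam $\hat{G}_1^p\ast_{\hat{A}^p}\hat{G}_2^p$ proper, and Corollary 3.2 (Mayer--Vietoris over $\hat{A}^p$) finishes. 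You instead exploit an observation the paper never uses: $A$ is central in the \emph{whole} group $G=G_1\ast_A G_2$, so $G$ is a central extension of the plain free product $(G_1/A)\ast(G_2/A)$. After that, only the trivial-amalgam Mayer--Vietoris (where properness is automatic; the factors are even retracts, so their topological $p$-embedding is immediate) and the central-extension principle are needed. This is more economical: it bypasses Lemma 3.3 and Higman's theorem entirely. Two remarks. First, your part (ii) and its ``hard part'' are literally Corollary 1.2 and Proposition 1.1 of the paper (with Corollary 1.3 supplying $A\in\mathcal C$), so you could simply cite them; that said, your transport-of-extension-class argument for the injectivity of $\hat{A}^p\to\hat{G}^p$ is a valid alternative to the paper's proof (which instead kills the class on an open subgroup of $Q$ and intersects conjugates of a complement), and your d\'evissage from $\mathbb Z/p$ to finite abelian coefficients goes through by the five lemma applied to the two long exact coefficient sequences. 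Second, what the paper's longer route buys is structural information your argument does not produce: $\hat{G}^p$ is identified as the \emph{proper} pro-$p$ amalgam $\hat{G}_1^p\ast_{\hat{A}^p}\hat{G}_2^p$, so the completions of the factors embed in $\hat{G}^p$; moreover the machinery of Lemma 3.3 is reused in the proof of Theorem 3.10, whereas your shortcut is special to the central case.
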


\begin{thmB} Let $G_1$ and $G_2$ be groups that lie in $\mathcal{C}$ and are residually $p$-finite for every prime $p$.  Then, if $A$ is a cyclic subgroup common to both $G_1$ and $G_2$, $G_1\ast_A G_2$ is in $\mathcal{C}$. 
\end{thmB}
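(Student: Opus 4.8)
The plan is to fix a prime $p$ and to compare the discrete and pro-$p$ Mayer--Vietoris sequences of the amalgam $G = G_1 \ast_A G_2$ through the completion maps, finishing with the five lemma. I would begin with a reduction on $A$: since $A$ is cyclic and embeds in the residually $q$-finite group $G_i$ for every prime $q$, it is itself residually $q$-finite for every $q$, and for a cyclic group this forces $A$ to be either trivial or infinite cyclic. In particular $A$ already lies in $\mathcal{C}$ (being trivial or free), and $H^n(A,\mathbb{Z}/p)=H^n(\hat A^p,\mathbb{Z}/p)=0$ for $n\geq 2$, with $\hat A^p$ equal to $1$ or to $\mathbb{Z}_p$. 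This vanishing is what will make the comparison transparent in high degrees.

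The next, and central, step is to identify $\hat G^p$ with the \emph{proper} amalgamated free pro-$p$ product $\hat{G_1}^p \amalg_{\hat A^p} \hat{G_2}^p$. When $A=1$ this is the free pro-$p$ product, which is automatically proper. When $A\cong\mathbb{Z}$, I would first verify that the induced map $\hat A^p\to\hat{G_i}^p$ is injective: if a generator $a$ of $A$ had image of finite order $p^k$ in the pro-$p$ group $\hat{G_i}^p$, then $a^{p^k}$ would lie in $\ker(G_i\to\hat{G_i}^p)$, which is trivial by residual $p$-finiteness, contradicting that $a$ has infinite order. Hence the closure of $A$ in $\hat{G_i}^p$ is a copy of $\mathbb{Z}_p$ onto which $\hat A^p=\mathbb{Z}_p$ maps isomorphically. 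One then has to upgrade this to properness of the amalgam over the procyclic subgroup $\hat A^p$ together with the identification $\hat G^p\cong\hat{G_1}^p\amalg_{\hat A^p}\hat{G_2}^p$; concretely, this reduces to showing that $G$ is residually $p$-finite and that its pro-$p$ topology induces the full pro-$p$ topology on each factor $G_i$ and on $A$.

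Granting the proper amalgam structure, continuous cohomology supplies a pro-$p$ Mayer--Vietoris sequence
\[
\cdots \to H^{n-1}(\hat A^p,\mathbb{Z}/p) \to H^n(\hat G^p,\mathbb{Z}/p) \to H^n(\hat{G_1}^p,\mathbb{Z}/p)\oplus H^n(\hat{G_2}^p,\mathbb{Z}/p) \to H^n(\hat A^p,\mathbb{Z}/p) \to \cdots,
\]
which, by naturality of the completion homomorphisms with respect to the inclusions $A\hookrightarrow G_i\hookrightarrow G$, maps onto the ordinary Mayer--Vietoris sequence of $G=G_1\ast_A G_2$. The vertical maps on the factor terms are isomorphisms because $G_1,G_2\in\mathcal{C}$, and those on the $A$-terms are isomorphisms because $A\in\mathcal{C}$. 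Applying the five lemma at each position $H^n(\hat G^p,\mathbb{Z}/p)\to H^n(G,\mathbb{Z}/p)$, using the four surrounding isomorphisms, forces this map to be an isomorphism in every degree, so that $G\in\mathcal{C}$.

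I expect the main obstacle to be the middle step: proving that the free pro-$p$ amalgam over the procyclic subgroup $\hat A^p$ is proper and coincides with $\hat G^p$. This is precisely where the hypothesis that $A$ is cyclic, rather than an arbitrary common subgroup, carries the argument, since amalgamation over a procyclic subgroup is tractable while properness may fail for larger amalgamated subgroups; it is also where the residual $p$-finiteness of $G_1$ and $G_2$ is indispensable, both to embed $\hat A^p$ into the factors and to control the pro-$p$ topology of $G$ on $G_1$, $G_2$, and $A$.
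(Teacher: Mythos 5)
Your overall architecture coincides with the paper's: compare the pro-$p$ and discrete Mayer--Vietoris sequences of $G=G_1\ast_A G_2$ through the completion maps and apply the five lemma (this is the paper's Theorem 3.1 and Corollary 3.2). Your preliminary reductions are sound: a cyclic subgroup of a group that is residually $q$-finite for every prime $q$ is indeed trivial or infinite cyclic, hence in $\mathcal{C}$; and your argument that $\hat{A}^p$ injects into $\hat{G_i}^p$ (the image of a generator has infinite order because the completion map of $G_i$ is injective, so its closure is a copy of $\mathbb{Z}_p$, and a continuous surjection $\mathbb{Z}_p\to\mathbb{Z}_p$ is an isomorphism) is a correct, slightly different route to the paper's Lemma 3.8. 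Note also that identifying $\hat{G}^p$ with the pro-$p$ pushout $\hat{G_1}^p\amalg_{\hat{A}^p}\hat{G_2}^p$ is formal, by universal properties.

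The genuine gap sits exactly where you flag ``the main obstacle'': you never prove that $G_1$ and $G_2$ are topologically $p$-embedded in $G$, which is what properness of the pro-$p$ amalgam means. Saying this ``reduces to showing that $G$ is residually $p$-finite and that its pro-$p$ topology induces the full pro-$p$ topology on each factor'' restates the problem rather than solving it, and without properness the pro-$p$ Mayer--Vietoris sequence does not exist, so the five lemma cannot be run. This step is the mathematical heart of the theorem, and the paper's proof of it rests on an external input you do not invoke: Higman's theorem that an amalgam of two finite $p$-groups over a common cyclic subgroup is residually $p$-finite. Concretely, the paper first shows (Lemma 3.6, due to Kim and McCarron) that in a residually $p$-finite group one can find, for each $n$, an open normal subgroup meeting $\langle a\rangle$ exactly in $\langle a^{p^n}\rangle$; this is used (Lemma 3.9) to manufacture, from any $N_i\unlhd_{o(p)} G_i$, subgroups $P_i\unlhd_{o(p)} G_i$ with $P_i\leq N_i$ and $P_1\cap A=P_2\cap A$. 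One then forms the amalgam $G_1/P_1\ast_{P_1A/P_1}G_2/P_2$ of finite $p$-groups over a cyclic subgroup, applies Higman's theorem to get a finite $p$-quotient into which each $G_i/P_i$ embeds, and pulls back to obtain $K\unlhd_{o(p)} G$ with $K\cap G_i\leq N_i$ (Lemma 3.3). This is precisely where the cyclicity of $A$ is used in an essential rather than merely convenient way: for larger amalgams the analogue of Higman's theorem fails, as the paper's concluding Example shows. Your proposal, as it stands, assumes this entire chain.
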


\noindent Examples of groups that satisfy both of the conditions in Theorem 3.10, i.e., that are in $\mathcal{C}$ and residually $p$-finite for every prime $p$, are free groups, finitely generated torsion-free nilpotent groups and-- in view of our results from Section 2-- right-angled Artin groups. 

We conclude Section 3 by describing an example of a free product of two finitely generated torsion-free nilpotent groups with noncyclic, noncentral amalgam that lies outside of the class $\mathcal{C}$. This example demonstrates that our stringent hypotheses in Theorems 3.5 and 3.10 regarding the amalgam cannot be weakened in any significant way.

\section{Notation and preliminary remarks}

In this section we establish the notation and terminology that we will use in the rest of the article. In addition,  we discuss some elementary properties of the class $\mathcal{C}$.

For any two elements $x$ and $y$ in a group $G$, we define  $[x,y]=x^{-1}y^{-1}xy$. Throughout the paper, $p$ will denote a prime natural number. A group with order a finite power of $p$ will be referred to as {\it $p$-finite}. Moreover, a group $G$ is {\it residually $p$-finite} if, for every $g\in G-\{1\}$, there exists a $p$-finite quotient of $G$ in which the image of $g$ is nontrivial. 

If $G$ is a group, $N\unlhd G$, and $[G:N]$ is a finite power of $p$, we write $N\unlhd_{o(p)} G$, where the notation is suggested by the fact that $N$ is open in the pro-$p$ topology on $G$. If $H$ is a subgroup of a group $G$, we will write $H\leq_{c(p)} G$ if $H$ is closed in the pro-$p$ topology on $G$, and we write $H\unlhd_{c(p)} G$ if, in addition, $H$ is normal.

Since we will be working with both pro-$p$ and discrete cohomology, it will be helpful to distinguish between the two types in our notation. Henceforth we will employ $H^*(\  \ ,\  \ )$ for discrete cohomology and $H^*_{\rm cont}(\  \ ,\  \ )$ for pro-$p$ cohomology. Also, $\mathbb Z/p$ will always be viewed as a trivial module with respect to any discrete group and a trivial, discrete topological module with respect to any pro-$p$ group.

If $H$ is a subgroup of the group $G$, we say that $H$ is {\it topologically $p$-embedded} in $G$ and write $H\leq_{t(p)}G$ if the subspace topology on $H$ inherited from the pro-$p$ topology on $G$ coincides with the full pro-$p$ topology on $H$. Note that the following three assertions concerning a subgroup $H$ of a group $G$ are equivalent:

(i) $H\leq_{t(p)} G$;

(ii) for each $N\unlhd_{o(p)} H$, there exists $M\unlhd_{o(p)} G$ such that $M\cap H\leq N$;

(iii) $\hat{H}^p$ embeds in $\hat{G}^p$.

One case of a topological $p$-embedding that is particularly relevant to the subject of this paper pertains to a central group extension whose quotient is in the class $\mathcal{C}$. This situation is treated in the following proposition, which is also proved in \cite{schick}, albeit in a different manner.

\begin{proposition} Assume $A\stackrel{\iota}{\rightarrowtail} G\stackrel{\epsilon}{\twoheadrightarrow} Q$ is a central group extension in which $A$ is finitely generated and $Q$ is in $\mathcal{C}$. Then $\hat{A}^p\stackrel{\hat{\iota}^p}{\rightarrowtail} \hat{G}^p\stackrel{\hat{\epsilon}^p}{\twoheadrightarrow} \hat{Q}^p$ is a short exact sequence of pro-$p$ groups; in other words, $\iota(A)\leq_{t(p)} G$. 
\end{proposition}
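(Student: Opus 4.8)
The plan is to reduce the assertion to a single injectivity statement and then settle that by a cohomological lifting argument. First I would observe that the sequence $\hat A^p \xrightarrow{\hat\iota^p}\hat G^p\xrightarrow{\hat\epsilon^p}\hat Q^p\to 1$ is always exact on the right, since pro-$p$ completion is right exact, and that it is automatically exact in the middle here: the image $\hat\iota^p(\hat A^p)$ equals the closure $\overline{c_G^p(\iota(A))}$ (the image of the compact group $\hat A^p$ is closed, and $c_A^p(A)$ is dense in $\hat A^p$), while, because $\iota(A)$ is central, its image in $\hat G^p$ is central, so the maximal pro-$p$ quotient of $G$ that kills $\iota(A)$ is $\hat G^p/\overline{c_G^p(\iota(A))}$; this quotient is exactly $\hat Q^p$, whence $\ker\hat\epsilon^p=\overline{c_G^p(\iota(A))}=\hat\iota^p(\hat A^p)$. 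Thus everything comes down to showing that $\hat\iota^p$ is injective, i.e., by the equivalence of (i) and (iii) recorded above and the fact that $A$ is finitely generated, to verifying criterion (ii): for each $N\unlhd_{o(p)}A$ there is $M\unlhd_{o(p)}G$ with $M\cap A\le N$.

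Next I would reduce to the case of finite $A$. Given $N\unlhd_{o(p)}A$, pass to $\bar G=G/N$ and $\bar A=A/N$; since $N\le A$ and $A$ is central, $\bar A$ is a finite central $p$-subgroup of $\bar G$ with $\bar G/\bar A\cong Q\in\mathcal C$. If I can show that the completion map sends $\bar A$ injectively into $\hat{\bar G}^p$, then for each nontrivial element of the finite group $\bar A$ there is a $p$-finite quotient of $\bar G$ detecting it; intersecting the finitely many kernels yields $\bar M\unlhd_{o(p)}\bar G$ with $\bar M\cap\bar A=1$, and its preimage $M$ in $G$ satisfies $M\unlhd_{o(p)}G$ and $M\cap A\le N$. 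So it suffices to prove: if $A$ is a finite central $p$-subgroup of $G$ with $G/A\in\mathcal C$, then $c_G^p$ is injective on $A$.

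For this core step I would argue cohomologically. The central extension $A\rightarrowtail G\twoheadrightarrow Q$ is classified by a class $\alpha\in H^2(Q,A)$, with $A$ a trivial module. A d\'evissage argument upgrades membership in $\mathcal C$ from $\mathbb Z/p$-coefficients to arbitrary finite $p$-group coefficients: writing any finite abelian $p$-group as an iterated extension of copies of $\mathbb Z/p$ and comparing the long exact sequences of $H^*(Q,-)$ and $H^*_{\rm cont}(\hat Q^p,-)$ (both of which exist for short exact sequences of finite discrete modules) by means of the five lemma, one obtains that $c_Q^p$ induces isomorphisms $H^n_{\rm cont}(\hat Q^p,M)\cong H^n(Q,M)$ for every $n$ and every finite abelian $p$-group $M$ with trivial action. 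In particular $\alpha=(c_Q^p)^*(\tilde\alpha)$ for some $\tilde\alpha\in H^2_{\rm cont}(\hat Q^p,A)$. The class $\tilde\alpha$ determines a central extension of topological groups $A\rightarrowtail E\twoheadrightarrow\hat Q^p$ in which $E$, being an extension of a pro-$p$ group by a finite $p$-group, is itself pro-$p$. The equality $(c_Q^p)^*(\tilde\alpha)=\alpha$ is precisely the condition guaranteeing a morphism of extensions, i.e., a homomorphism $\phi\colon G\to E$ that is the identity on $A$ and covers $c_Q^p$. Because $E$ is pro-$p$, $\phi$ factors as $\phi=\hat\phi\circ c_G^p$ for a continuous $\hat\phi\colon\hat G^p\to E$; restricting to $A$ gives $\mathrm{id}_A=\hat\phi\circ(c_G^p|_A)$, so $c_G^p|_A$ is injective, as required.

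The main obstacle is this final core step, and within it two points deserve care. The first is the d\'evissage: I must check that the comparison map $c_Q^p$ is compatible with the connecting homomorphisms of the two coefficient long exact sequences, so that the five lemma applies degreewise. The second, and more delicate, is the realization of the cohomological compatibility as an actual homomorphism $\phi$ across the discrete/profinite divide; here I would use that $H^2_{\rm cont}(\hat Q^p,A)$ classifies topological central extensions of $\hat Q^p$ by the finite group $A$, that pulling back such an extension along the continuous homomorphism $c_Q^p$ (with $Q$ given the discrete topology) yields exactly the discrete extension of $Q$ represented by $\alpha$, and that the standard correspondence between morphisms of extensions and pullbacks of classes then produces $\phi$; no continuity of $\phi$ is required, since its source $G$ is discrete.
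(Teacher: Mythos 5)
Your proposal is correct, and while it shares the paper's overall skeleton, its key step runs along a genuinely different track. Both arguments reduce to the case where $A$ is a finite central $p$-subgroup (you quotient by $N\unlhd_{o(p)}A$, the paper by a subgroup of $p$-power index in $A$; in either case normality in $G$ is automatic from centrality), and both hinge on lifting the extension class $\alpha\in H^2(Q,A)$ through the comparison map $H^2_{\rm cont}(\hat{Q}^p,A)\to H^2(Q,A)$. From there the paper stays elementary: it uses the fact that $H^2_{\rm cont}(\hat{Q}^p,A)$ is the direct limit of the cohomology of the finite quotients of $\hat{Q}^p$ to produce $R\unlhd_{o(p)}Q$ on which $\alpha$ restricts to zero, so that the preimage $U\unlhd_{o(p)}G$ of $R$ splits over $A$; a complement $V$ to $A$ in $U$ is normal in $U$ (centrality again), and the normal core of $V$ in $G$ is an open normal subgroup of $p$-power index meeting $A$ trivially. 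You instead realize the lifted class as an actual pro-$p$ extension $A\rightarrowtail E\twoheadrightarrow\hat{Q}^p$, extract a morphism $\phi\colon G\to E$ restricting to $\mathrm{id}_A$ from the equality of pulled-back classes, and factor $\phi$ through $c_G^p$ by the universal property of pro-$p$ completion, getting injectivity of $c_G^p|_A$ and then openness by intersecting the finitely many kernels. What your route buys: it makes explicit the d\'evissage (the five-lemma upgrade from $\mathbb Z/p$- to finite abelian $p$-group coefficients) that the paper silently assumes when it asserts surjectivity of the comparison map with $A$-coefficients, and it actually proves the right- and middle-exactness that the paper compresses into ``in other words''; it is also close in spirit to the Linnell--Schick proof that the paper cites as being done ``in a different manner.'' What the paper's route buys: after the lift it needs only finite group theory (complements and normal cores), avoiding the classification of topological extensions by $H^2_{\rm cont}$ and the compatibility of that classification with pullback along $c_Q^p$, both of which your argument must (and does, correctly) invoke. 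One small quibble: your parenthetical suggesting that the equivalence of (i) and (iii) requires $A$ to be finitely generated is unnecessary---the equivalence holds for arbitrary subgroups---though this does not affect the correctness of the proof.
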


\begin{proof} For the sake of simplicity, we assume that $A\leq G$ and $\iota$ is just the inclusion map. First we consider the case where $A$ is a finite $p$-group.  We will prove that $A\leq_{t(p)} G$ by establishing the existence of a normal subgroup $N$ in $G$ such that $[G:N]$ is a power of $p$ and $N\cap A=1$. Let $\xi\in H^2(Q,A)$ be the cohomology class of the extension $A\rightarrowtail G\twoheadrightarrow Q$.  The group $H_{\rm cont}^2(\hat{Q}^p,A)$ is the direct limit of the discrete cohomology with coefficients in $A$ of all the quotients of $\hat{Q}^p$ over open normal subgroups. This means that, since the map $H^2_{\rm cont}(\hat{Q}^p,A)\to H^2(Q,A)$ is surjective, there exists $R\unlhd_{o(p)} Q$ such that the image of $\xi$ in $H^2(R,A)$ is trivial. Thus we have $U\unlhd_{o(p)} G$ containing $A$ such that $U$ splits over $A$. Let $V\unlhd  U$ such that $U=AV$ and $A\cap V=1$. Take $N$ to be the intersection of all the conjugates of $V$ in $G$. Then $N\unlhd G$ and $N\cap A=1$. Moreover, each of these conjugates is normal in $U$ with index a power of $p$, and there are only finitely many of them. Hence $[U:N]$ is a power of $p$, implying that $[G :N]$ is also a power of $p$. It follows, then, that $A\leq_{t(p)} G$.

Now we consider the case where $A$ is an arbitrary finitely generated abelian group. Let $B\unlhd_{o(p)} A$. Since $A$ is finitely generated, $B$ contains a normal subgroup $C$ of $G$ such that $[A:C]$ is a power of $p$. By the first case, $A/C\unlhd_{t(p)} G/C$.
Hence there exists $N\unlhd_{o(p)} G$ such that $A\cap N\leq C\leq B$. Therefore,  $A\leq_{t(p)} G$.
\end{proof}

Proposition 1.1 has the following two corollaries regarding the class $\mathcal{C}$.

\begin{corollary} Let $A\rightarrowtail G\twoheadrightarrow Q$ be a central group extension with $A$ finitely generated. If $A$ and $Q$ are in $\mathcal{C}$, then $G$ is in $\mathcal{C}$.
\end{corollary}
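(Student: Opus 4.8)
The plan is to compare the Lyndon--Hochschild--Serre (LHS) spectral sequence of the discrete extension with the Hochschild--Serre spectral sequence of its pro-$p$ counterpart. First I would invoke Proposition 1.1: since $A$ is finitely generated and $Q\in\mathcal C$, the completion maps yield a short exact sequence of pro-$p$ groups $\hat A^p\rightarrowtail \hat G^p\twoheadrightarrow \hat Q^p$. Because $A$ is central in $G$ and $G$ is dense in $\hat G^p$, continuity of the commutator map forces the image of $\hat A^p$ to be central in $\hat G^p$; thus both extensions are central, and in each spectral sequence the quotient acts trivially on the cohomology of the kernel. Applying the two spectral sequences, both with coefficients in the finite module $\mathbb Z/p$, produces first-quadrant cohomological spectral sequences
\[
E_2^{s,t}=H^s\bigl(Q,H^t(A,\mathbb Z/p)\bigr)\Rightarrow H^{s+t}(G,\mathbb Z/p),
\]
\[
\tilde E_2^{s,t}=H^s_{\rm cont}\bigl(\hat Q^p,H^t_{\rm cont}(\hat A^p,\mathbb Z/p)\bigr)\Rightarrow H^{s+t}_{\rm cont}(\hat G^p,\mathbb Z/p).
\]
The commutative ladder of extensions given by the completion maps $c_A^p$, $c_G^p$, $c_Q^p$ is a morphism from the discrete extension to the pro-$p$ extension, and hence, by naturality of the spectral sequences, induces a morphism $\tilde E\to E$ compatible with the maps $H^*_{\rm cont}(\hat G^p,\mathbb Z/p)\to H^*(G,\mathbb Z/p)$ on the abutments.

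Next I would show this morphism is an isomorphism on $E_2$. Since $A\in\mathcal C$, the map $H^t_{\rm cont}(\hat A^p,\mathbb Z/p)\to H^t(A,\mathbb Z/p)$ is an isomorphism of trivial $\mathbb Z/p$-modules; moreover, as $A$ is finitely generated abelian, $H^t(A,\mathbb Z/p)$ is a \emph{finite-dimensional} $\mathbb Z/p$-vector space, so each coefficient module is a finite direct sum of copies of $\mathbb Z/p$. Factoring the $E_2$-comparison map as
\[
H^s_{\rm cont}\bigl(\hat Q^p,H^t_{\rm cont}(\hat A^p,\mathbb Z/p)\bigr)\xrightarrow{\ \sim\ } H^s_{\rm cont}\bigl(\hat Q^p,H^t(A,\mathbb Z/p)\bigr)\to H^s\bigl(Q,H^t(A,\mathbb Z/p)\bigr),
\]
the first arrow is an isomorphism because $A\in\mathcal C$, and the second is an isomorphism because $Q\in\mathcal C$ and both functors commute with the finite direct sum decomposition of the coefficients. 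Hence $\tilde E_2^{s,t}\to E_2^{s,t}$ is an isomorphism for all $s,t$.

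Finally, because both spectral sequences are first-quadrant and convergent, the comparison theorem for spectral sequences promotes the isomorphism on $E_2$ to an isomorphism on the abutments, which is precisely the completion map $H^n_{\rm cont}(\hat G^p,\mathbb Z/p)\to H^n(G,\mathbb Z/p)$. As this holds for every $n$ and every $p$, we conclude $G\in\mathcal C$. The step I expect to be the main obstacle is the careful construction of the comparison morphism between a continuous (pro-$p$) and a discrete spectral sequence, together with the verification that the $\hat Q^p$-action on $H^t_{\rm cont}(\hat A^p,\mathbb Z/p)$ is trivial and that this module is a finite discrete $\mathbb Z/p$-module, so that the reduction to $\mathbb Z/p$-coefficients, and thence the hypothesis $Q\in\mathcal C$, genuinely applies.
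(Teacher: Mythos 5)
Your proposal is correct and follows essentially the same route as the paper: invoke Proposition 1.1 to obtain the short exact sequence $\hat A^p\rightarrowtail \hat G^p\twoheadrightarrow \hat Q^p$, then compare the discrete and pro-$p$ Lyndon--Hochschild--Serre spectral sequences with $\mathbb Z/p$ coefficients. The paper states this comparison in two sentences; your write-up supplies the details it leaves implicit (centrality of the pro-$p$ kernel, triviality and finiteness of the coefficient modules, the $E_2$-isomorphism, and convergence), all of which are accurate.
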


\begin{proof} By Proposition 1.1, $\hat{A}^p\rightarrowtail \hat{G}^p\twoheadrightarrow
 \hat{Q}^p$ is a short exact sequence of pro-$p$ groups for each prime $p$. It follows, then, from the Lyndon-Hochschild-Serre spectral sequences for $A\rightarrowtail G\twoheadrightarrow Q$ and 
$\hat{A}^p\rightarrowtail \hat{G}^p\twoheadrightarrow
 \hat{Q}^p$ that the map $H^n_{\rm cont}(\hat{G}^p,\mathbb Z/p)\to H^n(G,\mathbb Z/p)$ is an isomorphism for each prime $p$ and $n\geq 0$.
 \end{proof}

\begin{corollary} The class $\mathcal{C}$ contains all finitely generated nilpotent groups.
\end{corollary}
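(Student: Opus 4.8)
The plan is to deduce this from Corollary 1.2 by a double induction, with finitely generated cyclic groups serving as the ultimate base case. The outer induction is on the nilpotency class $c$ of $G$. When $c\le 1$ the group $G$ is finitely generated abelian, a case I treat separately below. When $c\ge 2$, I would take $Z=Z(G)$; since $G$ is finitely generated nilpotent it is polycyclic, so $Z$ is finitely generated, and since $G$ is nilpotent $Z$ is nontrivial, while $G/Z$ is finitely generated nilpotent of class $c-1$. Granting the abelian case (so that $Z\in\mathcal{C}$) and the inductive hypothesis (so that $G/Z\in\mathcal{C}$), Corollary 1.2 applied to the central extension $Z\rightarrowtail G\twoheadrightarrow G/Z$ yields $G\in\mathcal{C}$.

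For the abelian case I would run an inner induction. A finitely generated abelian group decomposes as a finite direct product $C_1\times\cdots\times C_k$ of cyclic groups; since the group is abelian each factor is central, so $C_1\rightarrowtail C_1\times\cdots\times C_k\twoheadrightarrow C_2\times\cdots\times C_k$ is a central extension with finitely generated kernel. By induction on $k$ and Corollary 1.2, it suffices to show that a single finitely generated cyclic group lies in $\mathcal{C}$. Using the isomorphism $\mathbb{Z}/m\cong\prod_i\mathbb{Z}/p_i^{a_i}$ together with the same product reduction, and then the central extensions $\mathbb{Z}/p\rightarrowtail\mathbb{Z}/p^{a}\twoheadrightarrow\mathbb{Z}/p^{a-1}$, one further reduces to the two irreducible cases $\mathbb{Z}$ and $\mathbb{Z}/p$. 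The group $\mathbb{Z}$ is free and hence in $\mathcal{C}$. For $\mathbb{Z}/p$ a short direct computation finishes the job: at the prime $p$ the pro-$p$ completion is $\mathbb{Z}/p$ itself and both $H^n(\mathbb{Z}/p,\mathbb{Z}/p)$ and $H^n_{\mathrm{cont}}(\mathbb{Z}/p,\mathbb{Z}/p)$ equal $\mathbb{Z}/p$ in every degree, while at a prime $q\ne p$ the pro-$q$ completion is trivial and both $H^n(\mathbb{Z}/p,\mathbb{Z}/q)$ and $H^n_{\mathrm{cont}}$ vanish for $n\ge 1$ because the order $p$ is invertible modulo $q$.

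The essential content of the argument is carried entirely by Corollary 1.2, so no step should present a serious obstacle. The only place demanding genuine care is the base computation for $\mathbb{Z}/p$: one must keep separate the prime $p$ dividing the order and the primes $q$ that do not, and must check that in each degree it is precisely the map induced by the completion homomorphism $c_{\mathbb{Z}/p}^p$ (rather than merely some abstract isomorphism) that is an isomorphism. The structural facts invoked for the nilpotent induction---that a finitely generated nilpotent group is polycyclic with nontrivial, finitely generated center, and that $G/Z(G)$ has nilpotency class one less than $G$---are standard.
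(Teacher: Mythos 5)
Your proposal is correct and takes essentially the same route as the paper: cyclic groups as the base case, Corollary 1.2 to pass to finitely generated abelian groups, and then induction on the nilpotency class with another application of Corollary 1.2. The paper compresses all of this into three sentences (treating "any cyclic group is in $\mathcal{C}$" as clear), whereas you supply the implicit details---the explicit $\mathbb{Z}/p$ computation at each prime and the use of the center $Z(G)$ for the inductive step---but the underlying argument is identical.
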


\begin{proof} Clearly, any cyclic group is in $\mathcal{C}$. It follows, then, from Corollary 1.2 that every finitely generated abelian group is in the class as well.  Hence, by inducting on the nilpotency class and again applying Corollary 1.2, we can deduce that every finitely generated nilpotent group is in $\mathcal{C}$.
\end{proof}

Even though all finitely generated nilpotent groups are in $\mathcal{C}$,  not every polycyclic group lies in the class. For instance, if $G$ is the semidirect product of $\mathbb Z\oplus \mathbb Z$ with $\mathbb Z$ where $1$ induces the automorphism of $\mathbb Z\oplus \mathbb Z$ that interchanges the components, then, whenever $p$ is odd, $\hat{G}^p\cong \hat{\mathbb Z}^p\oplus \hat{\mathbb Z}^p$, implying that $H_{\rm cont}^2(\hat{G}^p,\mathbb Z/p)\ncong H^2(G,\mathbb Z/p)$.

\section{Right-angled Artin groups}

Right-angled Artin groups are defined as follows.

\begin{definition}{\rm A {\it right-angled Artin group} is any group with a finite generating set $X$ and a presentation of the form
$$\langle X\ |\ [x,y]=1 \ \mbox{\rm for all $(x, y)\in \Sigma$}\ \rangle$$
for some subset $\Sigma$ of the Cartesian product $X\times X$.}
\end{definition}

Our objective in this section is to prove that right-angled Artin groups are in the class $\mathcal{C}$. 
The approach we use in studying these types of groups owes a great deal to \cite{burillo}. It was also employed in \cite{kl} to prove that these groups have the same cohomology as their profinite completions. The key to our method is to view these groups as being formed by a finite sequence of HNN extensions. In analyzing HNN extensions we will employ the following notation: given a discrete group $G$ and an isomorphism $\phi:H\to K$, where both $H$ and $K$ are subgroups of $G$,  the HNN extension of $G$ with respect to $\phi$ is denoted by $G_{\phi}$. 
In other words,
$$G_{\phi}=\langle G, t\ |\ t^{-1}ht=\phi(h)\ \mbox{for all $h\in H$} \rangle .$$

In addition to HNN extensions of discrete groups, we will refer to pro-$p$ HNN extensions.
As described in \cite{profinite}, from any pro-$p$ group $\Gamma$ and any continous isomorphism $\theta: \Delta\to \Lambda$, where $\Delta$ and $\Lambda$ are both closed subgroups of $\Gamma$, we can form the pro-$p$ HNN extension of $\Gamma$ with respect to $\theta$. If $\Gamma$, $\Delta$ and $\Lambda$ are each embedded in the pro-$p$ HNN extension, we refer to the latter as a {\it proper}  
 pro-$p$ HNN extension.  In \cite{profinite} it is observed (without proof) that any proper pro-$p$ HNN extension gives rise to a Mayer-Vietoris sequence that relates the cohomology of the extension to that of the group $\Gamma$ and the subgroup $\Delta$. This sequence is a special case of the Mayer-Vietoris sequence for the fundamental group of a finite graph of pro-$p$ groups, which can be derived from results in \cite{protrees}. 

Our interest is in the case when $\Gamma=\hat{G}^p$, $\Delta=\hat{H}^p$, $\Lambda=\hat{K}^p$ and $\theta=\hat{\phi}^p$, where $G$ is a discrete group with topologically $p$-embedded subgroups $H$, $K$ and $\phi:H\to K$ is an isomorphism. In this case,  $\hat{G}^p_{\phi}$ is the pro-$p$ HNN extension of $\hat{G}^p$ with respect to $\hat{\phi}^p$. Moreover, if $G$ is topologically $p$-embedded in $G_{\phi}$, then the pro-$p$ HNN extension is proper and, therefore, gives rise to a Mayer-Vietoris sequence. This sequence and its relationship to the discrete Mayer-Vietoris sequence for $G_{\phi}$ are described in the following theorem.

\begin{theorem} Let $G$ be a group with isomorphic, topologically $p$-embedded subgroups $H$ and $K$. Assume $\phi:H\to K$ is an isomorphism and $G\leq_{t(p)} G_{\phi}$. Then, for each positive integer $n$,  we have a commutative diagram
\begin{equation} \minCDarrowwidth10pt  \begin{CD}
H_{\rm cont}^{n-1}(\hat{G}^p,\mathbb Z/p)  @>>> H_{\rm cont}^{n-1}(\hat{H}^p,\mathbb Z/p) @>>> H_{\rm cont}^n(\hat{G}^p_{\phi},\mathbb Z/p) @>>> H_{\rm cont}^n(\hat{G}^p,\mathbb Z/p)  @>>> H_{\rm cont}^n(\hat{H}^p,\mathbb Z/p)\\
@VVV @VVV @VVV @VVV @VVV \\
H^{n-1}(G,\mathbb Z/p)  @>>> H^{n-1}(H,\mathbb Z/p) @>>> H^n(G_{\phi},\mathbb Z/p) @>>> H^n(G,\mathbb Z/p) @>>> H^n(H,\mathbb Z/p),
\end{CD} \end{equation}
in which the rows are exact and the vertical maps are induced by the pro-$p$ completion maps for $G$, $H$ and $G_{\phi}$.
\end{theorem}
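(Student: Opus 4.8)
The plan is to derive both rows from the same homological mechanism---the long exact sequence of a short exact sequence of coefficient modules, combined with Shapiro's lemma---and then to read off the commutativity of the diagram from the naturality of that mechanism under the pro-$p$ completion maps.

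First I would record the identification underlying the whole diagram. Since $H,K\le_{t(p)} G$, the groups $\hat{H}^p$ and $\hat{K}^p$ embed in $\hat{G}^p$ and $\hat{\phi}^p\colon \hat{H}^p\to\hat{K}^p$ is a continuous isomorphism of closed subgroups, so the pro-$p$ HNN extension $\hat{G}^p_\phi$ is defined. Using the universal properties of the pro-$p$ completion and of the pro-$p$ HNN extension---every homomorphism from $G_\phi$ to a finite $p$-group factors through $\hat{G}^p$ on $G$ and sends $t$ to an element realizing $\hat{\phi}^p$, the defining relation extending by continuity from $H$ to $\hat{H}^p$---one checks that $\hat{G}^p_\phi$ is the pro-$p$ completion of $G_\phi$, so that $c^p_{G_\phi}\colon G_\phi\to \hat{G}^p_\phi$ is indeed the completion map appearing in the vertical arrows. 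The hypothesis $G\le_{t(p)} G_\phi$ then says exactly that $\hat{G}^p$ embeds in $\hat{G}^p_\phi$, i.e.\ that the pro-$p$ HNN extension is proper, which is what is needed to invoke the pro-$p$ Mayer--Vietoris sequence.

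Next I would set up the two sequences. For the bottom row I would use the Bass--Serre tree of $G_\phi$, whose augmented simplicial chain complex is the short exact sequence of $\mathbb{Z}G_\phi$-modules $0\to \mathbb{Z}[G_\phi/H]\to \mathbb{Z}[G_\phi/G]\to \mathbb{Z}\to 0$; applying $\operatorname{Ext}^\ast_{\mathbb{Z}G_\phi}(-,\mathbb{Z}/p)$ and identifying the Ext-groups of these induced modules with $H^\ast(H,\mathbb{Z}/p)$ and $H^\ast(G,\mathbb{Z}/p)$ via Shapiro's lemma yields the displayed discrete sequence. For the top row I would invoke the pro-$p$ tree on which $\hat{G}^p_\phi$ acts, as developed in \cite{profinite} and \cite{protrees}: properness supplies the analogous short exact sequence of $\mathbb{Z}/p[[\hat{G}^p_\phi]]$-modules $0\to \mathbb{Z}/p[[\hat{G}^p_\phi/\hat{H}^p]]\to \mathbb{Z}/p[[\hat{G}^p_\phi/\hat{G}^p]]\to \mathbb{Z}/p\to 0$, and applying continuous $\operatorname{Ext}$ together with the pro-$p$ Shapiro isomorphisms---which require $\hat{H}^p\le \hat{G}^p\le \hat{G}^p_\phi$ to be closed, guaranteed above---produces the top row.

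Finally, for the commutativity I would produce a morphism between the two short exact sequences of coefficient modules, compatible with the ring homomorphism $\mathbb{Z}G_\phi\to \mathbb{Z}/p[[\hat{G}^p_\phi]]$ induced by $c^p_{G_\phi}$: the completion maps on coset spaces $G_\phi/G\to \hat{G}^p_\phi/\hat{G}^p$ and $G_\phi/H\to \hat{G}^p_\phi/\hat{H}^p$ induce module maps that commute with the boundary maps and augmentations. The vertical arrows of the diagram are exactly the maps obtained by applying the comparison map from continuous to discrete $\operatorname{Ext}$ to this morphism, so the squares built from the connecting homomorphisms commute by the naturality of the long exact $\operatorname{Ext}$-sequence, while the squares at the $H^\ast(G)$- and $H^\ast(H)$-terms commute by the naturality of Shapiro's lemma under $c^p_G$ and $c^p_H$. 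The part I expect to require the most care is this last step: checking that the continuous-to-discrete comparison map intertwines the two Shapiro isomorphisms and the two connecting homomorphisms, since this is where the compatibility of the discrete tree with the pro-$p$ tree---and hence the genuine role of all three topological $p$-embedding hypotheses---is actually used.
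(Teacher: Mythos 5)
Your proposal is correct and takes essentially the same route the paper itself indicates: the paper states this theorem without proof, pointing to the pro-$p$ tree/graph-of-pro-$p$-groups machinery of \cite{profinite} and \cite{protrees} for the top row, to the standard (Bass--Serre) Mayer--Vietoris sequence for the bottom row, and implicitly to naturality for the commutativity---precisely the three ingredients you assemble. Your identification of the pro-$p$ completion of $G_{\phi}$ with the proper pro-$p$ HNN extension $\hat{G}^p_{\phi}$, and the flagged check that the continuous-to-discrete comparison intertwines the Shapiro isomorphisms and connecting maps, are exactly the details the paper delegates to the references.
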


Theorem 2.1 provides us with the following conditions under which an HNN extension is in the class $\mathcal{C}$.  

\begin{corollary} Let $\phi:H\to K$ be an isomorphism, where $H$ and $K$ are subgroups of a group $G$ that are topologically $p$-embedded for every prime $p$. Assume, further, that $G\leq_{t(p)}G_{\phi}$ for each $p$. If $G$ is in $\mathcal{C}$ and $H$ is in $\mathcal{C}$, then $G_{\phi}$ belongs to $\mathcal{C}$.
\end{corollary}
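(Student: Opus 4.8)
The plan is to read the result off directly from the commutative diagram furnished by Theorem 2.1, combined with a five-lemma argument; no new machinery is needed. First I would observe that the hypotheses of the corollary are precisely those required to invoke Theorem 2.1: for each prime $p$ the subgroups $H$ and $K$ are topologically $p$-embedded in $G$, and $G\leq_{t(p)} G_{\phi}$. Hence, fixing a prime $p$, for every positive integer $n$ I obtain the commutative diagram of Theorem 2.1 with exact rows, whose vertical maps are induced by the pro-$p$ completion maps of $G$, $H$ and $G_{\phi}$.

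Next I would exploit the membership of $G$ and $H$ in $\mathcal{C}$. By the definition of $\mathcal{C}$, the map $H^m_{\rm cont}(\hat{G}^p,\mathbb Z/p)\to H^m(G,\mathbb Z/p)$ is an isomorphism for every $m\geq 0$, and likewise $H^m_{\rm cont}(\hat{H}^p,\mathbb Z/p)\to H^m(H,\mathbb Z/p)$ is an isomorphism for every $m\geq 0$. Applying these statements in the two degrees $m=n-1$ and $m=n$ shows that the first, second, fourth and fifth vertical arrows of the diagram are all isomorphisms. The only arrow left undetermined is the central one, $H^n_{\rm cont}(\hat{G}^p_{\phi},\mathbb Z/p)\to H^n(G_{\phi},\mathbb Z/p)$.

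I would then apply the five lemma to the diagram. Since the two flanking vertical maps are isomorphisms (in particular the leftmost is surjective and the rightmost injective) and the two inner columns are isomorphisms as well, the five lemma forces the central vertical map to be an isomorphism. This establishes that $H^n_{\rm cont}(\hat{G}^p_{\phi},\mathbb Z/p)\to H^n(G_{\phi},\mathbb Z/p)$ is an isomorphism for every $n\geq 1$. The case $n=0$ is immediate, as both groups equal $\mathbb Z/p$ and the induced map is the identity. Since $p$ was an arbitrary prime and $n$ an arbitrary nonnegative integer, this yields $G_{\phi}\in\mathcal{C}$.

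As for the main obstacle, there is essentially none of substance at this stage, since the corollary is a formal consequence of Theorem 2.1. The only point demanding care is the bookkeeping: one must confirm that the memberships $G\in\mathcal{C}$ and $H\in\mathcal{C}$ deliver isomorphisms simultaneously in both degrees $n-1$ and $n$, so that all four outer columns of the diagram are isomorphisms and the five lemma genuinely applies. All of the real work—constructing the pro-$p$ Mayer–Vietoris sequence, verifying that the pro-$p$ HNN extension is proper, and checking commutativity of the diagram—has already been absorbed into the statement of Theorem 2.1.
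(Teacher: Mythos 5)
Your proposal is correct and follows essentially the same route as the paper: the paper's proof likewise notes that the first, second, fourth, and fifth vertical maps in diagram (2.1) are isomorphisms (since $G$ and $H$ lie in $\mathcal{C}$) and concludes by the five lemma that the middle map $H^n_{\rm cont}(\hat{G}^p_{\phi},\mathbb Z/p)\to H^n(G_{\phi},\mathbb Z/p)$ is an isomorphism. Your additional remarks on the $n=0$ case and on the degree bookkeeping are harmless elaborations of the same argument.
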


\begin{proof} In diagram (2.1) the first, second, fourth and fifth vertical maps are isomorphisms, forcing the third map to be one as well. Hence $G_{\phi}$ is in $\mathcal{C}$.
\end{proof}

Unfortunately, it is usually not the case that the base group is topologically $p$-embedded in an HNN extension. However, this does hold for the following type of HNN extension.

\begin{lemma} Let $G$ be a group and $H\leq_{t(p)} G$.  Define the group $\Gamma$ by
$$\Gamma=\langle G,t\ |\  t^{-1}ht=h\ \mbox{for all $h\in H$} \rangle.$$
Then $G\leq_{t(p)} \Gamma$. 
\end{lemma}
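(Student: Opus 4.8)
The plan is to exploit the fact that the associated isomorphism defining $\Gamma$ is the identity map on $H$, so that the stable letter $t$ is required only to centralize $H$ and can be collapsed. First I would construct a retraction $\rho\colon\Gamma\to G$ by declaring $\rho|_G=\mathrm{id}_G$ and $\rho(t)=1$. To check that this extends to a homomorphism, I would invoke the universal property of the HNN extension: a homomorphism out of $\Gamma$ corresponds to a homomorphism $\psi$ out of $G$ together with an element $\ell$ of the target satisfying $\ell^{-1}\psi(h)\ell=\psi(h)$ for all $h\in H$. Taking $\psi=\mathrm{id}_G$ and $\ell=1$, the relation $t^{-1}ht=h$ maps to the tautology $h=h$, so this choice is admissible. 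Since $\Gamma$ is generated by $G$ together with $t$ and $\rho$ fixes $G$ pointwise, $\rho$ is a surjective retraction.

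I would then establish $G\leq_{t(p)}\Gamma$ through the equivalent condition (ii), which requires that for every $N\unlhd_{o(p)}G$ there exist $M\unlhd_{o(p)}\Gamma$ with $M\cap G\leq N$. Given such an $N$, I would set $M=\rho^{-1}(N)$. Then $M\unlhd\Gamma$, and since $\rho$ is onto we have $\Gamma/M\cong G/N$, so $[\Gamma:M]=[G:N]$ is a power of $p$ and hence $M\unlhd_{o(p)}\Gamma$. Finally, because $\rho$ restricts to the identity on $G$, an element $g\in G$ lies in $M$ exactly when $g=\rho(g)\in N$; thus $M\cap G=N$, which verifies (ii) and completes the proof.

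The one step that carries any weight is the first: once one notices that the identity gluing makes the stable letter redundant and supplies the retraction, everything else is routine bookkeeping, so there is no real obstacle. I would also point out that this argument never uses the standing hypothesis $H\leq_{t(p)}G$; the conclusion $G\leq_{t(p)}\Gamma$ holds for the identity HNN extension over an arbitrary subgroup $H$. A more laborious alternative would be to produce a finite $p$-group quotient of $\Gamma$ extending $G\twoheadrightarrow G/N$ by sending $t$ to an element centralizing the image of $H$; but the natural choice of that element is $1$, which merely reproduces the retraction, so I would present the retraction argument as the cleanest route.
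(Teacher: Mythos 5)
Your proof is correct and is essentially the paper's own argument: composing your retraction $\rho$ with the quotient map $G\to G/N$ gives precisely the epimorphism $\theta\colon \Gamma\to G/N$ (mapping $G$ canonically onto $G/N$ and $t$ to $1$) that the paper uses, and your $M=\rho^{-1}(N)$ is exactly $\ker\theta$. Your side remark is also accurate: the paper's proof likewise never invokes the hypothesis $H\leq_{t(p)}G$, which matters only in the later results (Theorem 2.1, Lemma 2.4) where properness of the pro-$p$ HNN extension is needed.
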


\begin{proof} Assume $N\unlhd_{o(p)} G$. Then there is an epimorphism $\theta:\Gamma \to G/N$ mapping $G$ canonically onto $G/N$ and $t$ to $1$. Moreover, $\mbox{Ker}\ \theta\unlhd_{o(p)} \Gamma$ and $\mbox{Ker}\ \theta\cap G\leq N$. Therefore, $G\leq_{t(p)} \Gamma$. 
\end{proof}

The above lemma  immediately yields the

\begin{lemma} Assume $G$ is a group and $H\leq_{t(p)} G$ for every prime $p$ . 
 Define the group $\Gamma$ by
$$\Gamma=\langle G,t\ |\  t^{-1}ht=h\ \mbox{for all $h\in H$} \rangle.$$
If $G$ and $H$ are in $\mathcal{C}$, then $\Gamma$ also lies in $\mathcal{C}$.
\end{lemma}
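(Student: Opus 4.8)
The plan is to recognize that $\Gamma$ is nothing other than the HNN extension $G_\phi$ in the notation of Theorem 2.1 and Corollary 2.2, taken in the special case where $\phi\colon H\to H$ is the identity automorphism (so that $K=H$). Once this identification is made, the natural strategy is simply to invoke Corollary 2.2, since that corollary already packages the conclusion we want: under suitable topological-embedding hypotheses, an HNN extension of a group in $\mathcal{C}$ along an associated subgroup in $\mathcal{C}$ again lies in $\mathcal{C}$. Thus the entire task reduces to checking that the three hypotheses of Corollary 2.2 are met in the present situation.

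I would verify these hypotheses in turn. First, Corollary 2.2 demands that the associated subgroups $H$ and $K$ be topologically $p$-embedded in $G$ for every prime $p$; here $K=H$, so this condition is exactly the standing assumption $H\leq_{t(p)}G$ for all $p$. Second, it requires that $G\leq_{t(p)}\Gamma$ for each prime $p$; this is precisely the conclusion of Lemma 2.3, whose hypothesis $H\leq_{t(p)}G$ we are again granted. Third, it requires that both $G$ and $H$ belong to $\mathcal{C}$, which is assumed outright. With all three conditions confirmed, Corollary 2.2 applies to $G_\phi=\Gamma$ and yields $\Gamma\in\mathcal{C}$.

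The point worth emphasizing is where the genuine content resides. The only potentially delicate hypothesis in Corollary 2.2 is the topological $p$-embedding of the base group $G$ into the HNN extension, a property that, as the remark preceding Lemma 2.3 warns, typically fails for general HNN extensions. What rescues the argument is the very special \emph{stable-letter} relation $t^{-1}ht=h$: because $t$ acts trivially on $H$, one can build, from any $N\unlhd_{o(p)}G$, an epimorphism $\Gamma\to G/N$ killing $t$, and this is exactly the mechanism Lemma 2.3 exploits. Hence the real obstacle has already been surmounted in Lemma 2.3, and the present statement follows with no further work—consistent with the excerpt's remark that the lemma follows ``immediately.'' I would therefore present the proof as a one-line deduction: apply Lemma 2.3 to obtain $G\leq_{t(p)}\Gamma$ for every $p$, and then feed this, together with the hypotheses on $H$ and the membership of $G$ and $H$ in $\mathcal{C}$, into Corollary 2.2.
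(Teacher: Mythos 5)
Your proof is correct and is exactly the argument the paper intends: the paper states that Lemma 2.3 ``immediately yields'' this result, with the implicit reasoning being precisely your chain---identify $\Gamma$ with $G_{\phi}$ for $\phi=\mathrm{id}_H$, use Lemma 2.3 to supply the hypothesis $G\leq_{t(p)}G_{\phi}$, and then apply Corollary 2.2. Your verification of the hypotheses of Corollary 2.2 and your remark on where the real difficulty lies (the base group's topological $p$-embedding, handled by Lemma 2.3) match the paper's structure exactly.
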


We now employ Lemma 2.4 to establish that every right-angled Artin group is in $\mathcal{C}$. Before proceeding with the proof, we require the following lemma.

\begin{lemma} If $G$ is a right-angled Artin group with generating set $X$, then, for every $X'\subseteq X$, $\langle X'\rangle \leq_{t(p)} G.$
\end{lemma}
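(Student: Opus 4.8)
The plan is to exploit the fact that a right-angled Artin group admits a \emph{retraction} onto each of its ``full'' subgroups $\langle X'\rangle$, and to observe that any subgroup arising as a retract is automatically topologically $p$-embedded. I would use characterization (ii) of topological $p$-embedding from the preliminary remarks: it suffices to show that for every $N\unlhd_{o(p)}\langle X'\rangle$ there exists some $M\unlhd_{o(p)}G$ with $M\cap\langle X'\rangle\leq N$.

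First I would construct the retraction. Define an endomorphism $r\colon G\to G$ by specifying it on the generating set via $r(x)=x$ for $x\in X'$ and $r(x)=1$ for $x\in X\setminus X'$. To see that $r$ is well defined it is enough to check that each defining relator is sent to a relator: a relator of $G$ has the form $[x,y]$ with $(x,y)\in\Sigma$, and $r([x,y])=[r(x),r(y)]$. If both $x$ and $y$ lie in $X'$, this is again $[x,y]$, which is trivial in $G$; otherwise at least one of $r(x),r(y)$ equals $1$, so the commutator is trivial. This is exactly the point at which the commutator form of the relations is used, and it is the only verification that invokes any relation at all. Since the image of $r$ lies in $\langle X'\rangle$ and $r$ restricts to the identity on $\langle X'\rangle$, the map $r$ is a retraction of $G$ onto $H:=\langle X'\rangle$.

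Next I would feed an arbitrary $N\unlhd_{o(p)}H$ into this retraction. Put $M:=r^{-1}(N)$. Because $r$ maps $G$ onto $H$, we have $[G:M]=[H:N]$, a power of $p$, and $M\unlhd G$; thus $M\unlhd_{o(p)}G$. Since $r$ is the identity on $H$, for $h\in H$ we have $h\in M$ precisely when $r(h)=h\in N$, so $M\cap H=N$. Hence condition (ii) is satisfied (indeed with equality), yielding $\langle X'\rangle\leq_{t(p)}G$ for every prime $p$.

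The argument is short, and essentially all of its content resides in the existence of the retraction; the passage from a retraction to a topological $p$-embedding is purely formal. Accordingly, I do not anticipate a genuine obstacle, but the step deserving the most care is the well-definedness of $r$, i.e. confirming that killing the generators outside $X'$ respects every defining relation. As noted, this holds precisely because all relators are weight-two commutators, and it is this structural feature of right-angled Artin groups that makes the whole approach work.
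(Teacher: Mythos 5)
Your proof is correct, but it takes a genuinely different route from the paper's. The paper proves this lemma by induction on $|X|$: it peels off a single generator $x\in X-X'$, writes $G$ as an HNN extension $\langle H, x \mid [x,y]=1\ \mbox{for all}\ y\in Y\rangle$ over the subgroup $H$ generated by $X-\{x\}$, invokes the inductive hypothesis $\langle X'\rangle\leq_{t(p)}H$ to produce $N\unlhd_{o(p)}H$ with $N\cap\langle X'\rangle\leq U$, and then kills the single generator $x$ via a homomorphism $\theta\colon G\to H/N$, taking $\mathrm{Ker}\,\theta$ as the required open normal subgroup of $G$. You dispense with both the induction and the HNN decomposition: you kill all generators outside $X'$ simultaneously, obtaining a retraction $r\colon G\to\langle X'\rangle$ (well defined precisely because every relator is a commutator of generators), and then observe the purely formal fact that any retract of any group is topologically $p$-embedded, indeed with $r^{-1}(N)\cap\langle X'\rangle=N$ exactly, not merely $\leq N$. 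The two arguments share the same underlying mechanism --- the paper's $\theta$ is essentially your retraction composed with the quotient $H\to H/N$, applied one generator at a time --- but yours is shorter, isolates a reusable general principle (retracts are topologically $p$-embedded), and needs no induction, whereas the paper's version keeps the proof in the same mold as the rest of its Section 2 (notably Lemma 2.10 and Theorems 2.6 and 2.11), where the iterated-HNN viewpoint genuinely is needed.
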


\begin{proof} The proof is by induction on the cardinality of $X$, the case $|X|=1$ being trivial. Assume $|X|>1$. If $X'=X$, then the conclusion follows at once. Assume $X'\neq X$, and let $x\in X-X'$.  Define $H$ to be the group generated by $X-\{ x \}$ with all of the same relators as $G$ except those involving $x$. Furthermore, let $Y$ be the set of all elements in $X-\{ x\}$ that commute with $x$ in $G$. Then
$$G=\langle H, x\ |\ [x,y]=1\ \mbox{for all $y\in Y$}\rangle.$$
Now let $U\unlhd_{o(p)} \langle X'\rangle$. By the inductive hypothesis, $\langle X'\rangle\leq_{t(p)} H$, which means that there exists $N\unlhd_{o(p)} H$ such that $N\cap \langle X'\rangle\leq U$. Moreover, there is a map $\theta:G\to H/N$ that maps $H$ canonically onto $H/N$ and $x$ to $1$. Hence $\mbox{Ker}\ \theta \unlhd_{o(p)} G$ and $\mbox{Ker}\ \theta \cap H\leq N$. Thus $\mbox{Ker}\ \theta \cap \langle X'\rangle\leq U$. It follows, then, that $\langle X'\rangle \leq_{t(p)} G$.
\end{proof}

\begin{theorem} Every right-angled Artin group is in $\mathcal{C}$.
\end{theorem}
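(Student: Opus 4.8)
The plan is to induct on the cardinality of the generating set $X$. When $|X|\le 1$ the group $G$ is either trivial or infinite cyclic, hence lies in $\mathcal{C}$, so it suffices to treat the inductive step under the assumption that every right-angled Artin group on fewer generators belongs to $\mathcal{C}$.

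For the inductive step I would reuse the HNN decomposition already exploited in the proof of Lemma 2.5. Choose a generator $x\in X$, let $H$ be the group generated by $X-\{x\}$ with all relators of $G$ not involving $x$, and let $Y\subseteq X-\{x\}$ be the set of generators commuting with $x$. Then
$$G=\langle H, x\ |\ [x,y]=1\ \mbox{for all $y\in Y$}\rangle ,$$
and since $x$ commutes with every generator of $\langle Y\rangle$ it commutes with all of $\langle Y\rangle$. Thus $G$ is precisely the HNN extension of $H$ over the subgroup $\langle Y\rangle$ with the identity gluing isomorphism and stable letter $x$, that is, the group $\Gamma$ of Lemma 2.4 with base group $H$ and distinguished subgroup $\langle Y\rangle$.

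To invoke Lemma 2.4 I need three things: that $\langle Y\rangle\leq_{t(p)} H$ for every prime $p$, that $H\in\mathcal{C}$, and that $\langle Y\rangle\in\mathcal{C}$. The first is immediate from Lemma 2.5 applied to the right-angled Artin group $H$ and the subset $Y\subseteq X-\{x\}$. For the other two, the key observation is that both $H$ and $\langle Y\rangle$ are themselves right-angled Artin groups on proper subsets of $X$ --- $H$ on $X-\{x\}$ and $\langle Y\rangle$ on $Y$ --- because a subgroup of a right-angled Artin group generated by a subset of the standard generators is again a right-angled Artin group, a fact witnessed by the retraction that kills the remaining generators. Both have strictly fewer generators than $G$, so the inductive hypothesis gives $H,\langle Y\rangle\in\mathcal{C}$, and Lemma 2.4 then yields $G=\Gamma\in\mathcal{C}$, closing the induction.

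The genuinely substantial input is already discharged by the preceding lemmas: the topological $p$-embedding $\langle Y\rangle\leq_{t(p)} H$ of Lemma 2.5, which (via Lemma 2.3) forces the base group to be topologically $p$-embedded in the HNN extension and hence makes the associated pro-$p$ HNN extension proper, together with the transport of $\mathcal{C}$-membership across such an extension (Lemma 2.4, resting on the Mayer--Vietoris comparison of Theorem 2.1). Given these, the only new point is the bookkeeping recognition that the two associated subgroups are again right-angled Artin groups on fewer generators so that the induction applies; I expect this organizational matching, rather than any analytic difficulty, to be the only real obstacle.
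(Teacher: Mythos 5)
Your proof is correct and follows essentially the same route as the paper: induct on the number of generators, realize $G$ as the HNN extension of the sub-right-angled Artin group $H$ over $\langle Y\rangle$ with identity gluing isomorphism, and combine Lemma 2.5 (topological $p$-embedding of $\langle Y\rangle$ in $H$) with Lemma 2.4. The only cosmetic differences are that the paper treats the relator-free (free group) case separately and removes a generator appearing in some relator, whereas you remove an arbitrary generator, and that you make explicit the retraction argument showing $\langle Y\rangle$ is itself a right-angled Artin group on fewer generators, a fact the paper uses implicitly when applying its inductive hypothesis.
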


\begin{proof}
The proof is by induction on the number of generators, the case of one generator being trivial.  Let $G$ be a right-angled Artin group with generating set $X$ containing more than one element, and assume that every right-angled Artin group with fewer generators than $G$ lies in $\mathcal{C}$.  If $G$ has no relators, then it is free and thus in $\mathcal{C}$. Suppose $G$ has at least one relator, and assume $[x_0,y_0]$ is one of its relators. Let $[x_0,y_0], [x_1,y_0], \cdots , [x_l,y_0]$ be a list of all the relators that involve $y_0$. 
 Now define $H$ to be the group generated by $X-\{y_0\}$ with all of the same relators as $G$ except those involving $y_0$.   In view of the inductive hypothesis, $H$ must belong to $\mathcal{C}$. Moreover, 
$$G=\langle H, y_0\ |\ [x_0,y_0]=[x_1,y_0]=\cdots =[x_l, y_0]=1\rangle.$$
By Lemma 2.5, the subgroup of $H$ generated by $x_0,\cdots , x_l$ is topologically $p$-embedded in $H$. 
Moreover, by the inductive hypothesis, this subgroup must belong to $\mathcal{C}$. Therefore, appealing to Lemma 2.4 allows us to conclude that $G$ lies in $\mathcal{C}$ as well.
\end{proof}

We conclude this section by showing that our conception of right-angled Artin groups as arising from a finite sequence of HNN extensions can also be employed to establish another important property of these groups, namely, that they are residually $p$-finite for every prime $p$.  This property, however, is not a new discovery: it also follows from G. Duchamp and D. Krob's result in \cite{duchamp} that right-angled Artin groups are residually torsion-free nilpotent. Nevertheless, we include our proof as it shares nothing with Duchamp and Krob's and it complements nicely the proof of Theorem 2.6. In essence, our argument mirrors closely that of [{\bf 2}, Theorem 3.4].

We begin with the following concept from the theory of HNN extensions.

\begin{definition}{\rm  Let $G$ be a group with subgroups $H$ and $K$ such that there is an isomorphism $\phi:H\to K$. The sequence $g_0, t^{\epsilon_1}, g_1, t^{\epsilon_2}, \cdots ,t^{\epsilon_n}, g_n$ of elements of $G_{\phi}$ is a {\it reduced sequence} if the following conditions are satisfied: 

(i) $\epsilon_i=\pm 1$ for all $i=0, \cdots, n$;

 (ii) $g_i\in G$ for all $i=0, \cdots, n$;

(iii) the sequence fails to contain any segments of the form $t^{-1}, g_i, t$ with $g_i\in H$ or 

$t, g_i, t^{-1}$ with $g_i\in K$.} 
\end{definition}

Reduced sequences are important for HNN extensions because of the following theorem from \cite{lyndon}. 

\begin{theorem}{\rm ([{\bf 12}, Theorem 2.1])} Let $G$ be a group with subgroups $H$ and $K$ such that there is an isomorphism $\phi:H\to K$. Then the following two statements hold. 

(i) Every element of $G_{\phi}$ has a representation as a product $g_0t^{\epsilon_1}g_1t^{\epsilon_2}\cdots t^{\epsilon_n}g_n,$ where

$g_0, t^{\epsilon_1}, g_1, t^{\epsilon_2}, \cdots ,t^{\epsilon_n}, g_n$ is a reduced sequence . 

(ii) If $g_0, t^{\epsilon_1}, g_1, t^{\epsilon_2}, \cdots ,t^{\epsilon_n}, g_n$ is a reduced sequence with $n\geq 1$, then $$g_0t^{\epsilon_1}g_1t^{\epsilon_2}\cdots t^{\epsilon_n}g_n\neq 1.$$
\end{theorem}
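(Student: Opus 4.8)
The plan is to treat the two assertions separately. Part (i), the existence of a reduced representative, is the routine direction, while part (ii)---the statement that a reduced sequence of length $n\ge 1$ cannot represent the identity, i.e.\ Britton's Lemma---carries all the content and is where I would concentrate the work.

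For part (i) I would argue by a terminating rewriting procedure. Starting from any expression for an element of $G_\phi$ as an alternating product of elements of $G$ and letters $t^{\pm1}$, I would scan for a \emph{pinch}, that is, a subword $t^{-1}g t$ with $g\in H$ or $t g t^{-1}$ with $g\in K$. Each such pinch may be removed using the defining relation, replacing $t^{-1}gt$ by $\phi(g)\in G$ or $t g t^{-1}$ by $\phi^{-1}(g)\in G$ and amalgamating the resulting $G$-element with its neighbours. Since every such move decreases the number of occurrences of $t^{\pm1}$ by two, the procedure terminates, and the final word is reduced; this establishes (i).

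For part (ii) I would use the van der Waerden normal-form trick. Fix once and for all a transversal $T_H$ for the right cosets of $H$ in $G$ and a transversal $T_K$ for the right cosets of $K$ in $G$, each containing $1$, and let $W$ be the set of \emph{normal forms}: reduced sequences $(g_0,t^{\epsilon_1},c_1,\dots,t^{\epsilon_n},c_n)$ with $g_0\in G$ and, for $i\ge1$, $c_i\in T_K$ when $\epsilon_i=+1$ and $c_i\in T_H$ when $\epsilon_i=-1$. I would then define permutations of $W$: an element $g\in G$ acts by $g_0\mapsto g g_0$, while $t^{\pm1}$ acts by prepending that letter and renormalising---splitting $g_0$ through the relevant transversal and transferring the resulting $H$- or $K$-factor into the new leading $G$-slot via $\phi$ or $\phi^{-1}$, with an additional cancellation in the pinch case. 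The crucial step is to verify that these permutations satisfy every defining relation of $G_\phi$, namely the relations holding in $G$ together with $t^{-1}ht=\phi(h)$; once this is checked, the universal property of the presentation yields a homomorphism $\rho\colon G_\phi\to\mathrm{Sym}(W)$.

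Finally, given a reduced word $w=g_0t^{\epsilon_1}g_1\cdots t^{\epsilon_n}g_n$ with $n\ge1$, I would apply $\rho(w)$ to the trivial normal form $(1)$. Because $w$ is reduced, no pinch is triggered during the successive renormalisations, so the image is a normal form containing exactly $n$ letters $t^{\pm1}$; in particular it is not $(1)$, whence $\rho(w)\ne\mathrm{id}$ and therefore $w\ne1$ in $G_\phi$, which is (ii). The main obstacle is precisely the well-definedness and relation-invariance of the $t^{\pm1}$-action: one must run a careful case analysis according to whether prepending $t^{\pm1}$ merely lengthens the word or produces a pinch that forces a transfer through $\phi^{\pm1}$, and confirm that the transversal bookkeeping makes every assignment single-valued and compatible with the relations. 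A more conceptual alternative that sidesteps this computation is to realise $G_\phi$ as the fundamental group of a graph of groups with a single vertex carrying $G$ and a single loop carrying $H$ (identified with $K$ through $\phi$); the associated Bass--Serre tree then furnishes the normal form directly, a reduced sequence of length $\ge1$ moving the base vertex along a non-backtracking path and therefore representing a nontrivial element.
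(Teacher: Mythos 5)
This statement is not proved in the paper at all: it is quoted with a citation to Lyndon and Schupp [{\bf 12}, Theorem 2.1], so there is no internal proof to compare against, and the relevant benchmark is the argument in that cited source. Your sketch reproduces that standard argument essentially verbatim in outline: pinch-removal, terminating because each move deletes two $t^{\pm 1}$-letters, for part (i); and the van der Waerden permutation representation on coset-normal forms for part (ii), which is Britton's Lemma. Your transversal conventions ($T_K$ after $t^{+1}$, $T_H$ after $t^{-1}$) are the correct ones for the relation $t^{-1}ht=\phi(h)$, and the Bass--Serre alternative you mention is an equally legitimate, standard route. The only caveat is that the entire substance of (ii) lives in the step you flag but do not carry out --- checking that the permutations assigned to $t^{\pm 1}$ and to elements of $G$ respect the defining relations, and the induction showing that a reduced word of length $n\geq 1$ sends the trivial normal form to one with exactly $n$ stable letters --- so as written this is a correct plan with the routine but necessary case analysis deferred, rather than a complete proof.
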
 
   
Now we examine two types of HNN extensions that are residually $p$-finite.

\begin{lemma} Let $G$ be a finite $p$-group and $H$ a subgroup of $G$. Define the group $\Gamma$ by
$$\Gamma=\langle G,t\ |\  t^{-1}ht=h\ \mbox{for all $h\in H$} \rangle.$$
Then $\Gamma$ is residually $p$-finite.
\end{lemma}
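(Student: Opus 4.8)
The plan is to combine the normal form theorem for HNN extensions (Theorem 2.8) with an explicit construction of $p$-finite quotients. By Theorem 2.8(i) every element of $\Gamma$ is a product $g_0 t^{\epsilon_1} g_1 \cdots t^{\epsilon_n} g_n$ coming from a reduced sequence, and by Theorem 2.8(ii) such a word with $n \ge 1$ is nontrivial. So it suffices to separate from the identity each nontrivial reduced word, and I would organize the argument according to the shape of the word.

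First I would dispose of the two easy cases with the two obvious homomorphisms. The assignment $g \mapsto g$ for $g \in G$ and $t \mapsto 1$ respects each relation $t^{-1}ht = h$ and hence defines a retraction $\Gamma \twoheadrightarrow G$; since $G$ is already $p$-finite, this separates every nontrivial element of the base group, which handles the case $n = 0$. Next, the assignment $g \mapsto 0$ for $g\in G$ and $t \mapsto 1 \in \mathbb{Z}$ defines the $t$-exponent-sum homomorphism $\pi \colon \Gamma \to \mathbb{Z}$; composing with $\mathbb{Z} \twoheadrightarrow \mathbb{Z}/p^k$ for $p^k > |\sum_i \epsilon_i|$ separates every reduced word whose exponent sum $\sum_i \epsilon_i$ is nonzero. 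I would also record that, because $G$ and $H$ are finite $p$-groups, the pro-$p$ topologies on $G$ and $H$ are discrete, so $H \leq_{t(p)} G$ automatically and, by Lemma 2.3, $G \leq_{t(p)} \Gamma$.

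The remaining and genuinely hard case is a nontrivial reduced word with $n \ge 1$ and exponent sum zero, which both maps above kill. For this I would build a single $p$-finite quotient by ``unrolling'' the HNN extension, i.e.\ by finitizing the action of $\Gamma$ on its Bass--Serre tree: since $G$ is finite the tree is locally finite with finite stabilizers, and one seeks a finite $p$-group $P$ with a homomorphism $\rho \colon \Gamma \to P$ that detects the word. Such a $\rho$ is subject to two competing constraints. It must respect $t^{-1}ht=h$, which forces $\rho(H)$ into the centralizer of $\rho(t)$ (in a wreath-type model $G \wr \mathbb{Z}/p^a$ this is the $\rho(t)$-invariant diagonal copy of $H$); and it must separate the finitely many distinct tree vertices, equivalently the finitely many cosets, visited by the word, reducedness (Theorem 2.8(ii)) being exactly what rules out immediate backtracking.

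The main obstacle is this last case. A plain cyclic-shift model over $\mathbb{Z}/p^a$, with $H$ placed diagonally, is not enough: an exponent-sum-zero path may revisit the same height, so distinct syllables can land on the same coordinate and cancel over long range, even though the word is nontrivial. The real work is therefore to use the finer tree structure rather than the line, constructing an iterated-wreath $p$-group that keeps the visited vertices distinct while still sending $\rho(H)$ into the centralizer of $\rho(t)$; I expect this to be the crux of the proof and the point at which the method of [{\bf 2}, Theorem 3.4] is decisive. A conceptually cleaner but less self-contained alternative, available here precisely because $G$ and $H$ are finite, is to observe that the associated pro-$p$ HNN extension of $\hat{G}^p = G$ over $\hat{H}^p = H$ is proper, hence a pro-$p$ group into which $\Gamma$ maps by the universal property; matching the discrete reduced form against the reduced form in this proper pro-$p$ HNN extension would show the map is injective, and injectivity into a pro-$p$ group is exactly residual $p$-finiteness.
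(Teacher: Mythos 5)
Your two easy homomorphisms are fine as far as they go, but they dispose only of the easy words, and you concede the rest: for a reduced word with $n\geq 1$, zero $t$-exponent sum, and trivial image under the retraction, you describe the shape of a hoped-for quotient --- an iterated-wreath $p$-group finitizing the action on the Bass--Serre tree --- but never construct it, so as written this is a plan rather than a proof. The pro-$p$ fallback does not rescue it, because it is circular: since $G$ is a finite $p$-group, the pro-$p$ HNN extension of $G$ over $H$ has the same universal property with respect to finite $p$-groups as $\hat{\Gamma}^p$, i.e.\ it \emph{is} $\hat{\Gamma}^p$, so the assertion that $\Gamma$ injects into it is precisely the statement that $\Gamma$ is residually $p$-finite --- the thing to be proved. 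Properness itself is easy (the retraction $\theta:\Gamma\to G$ already shows that $G$ embeds in $\hat{\Gamma}^p$), but properness only concerns the base and associated subgroups; the step ``matching the discrete reduced form against the reduced form in the proper pro-$p$ HNN extension'' presupposes a uniqueness-of-reduced-forms theorem in the pro-$p$ category that you neither state nor prove, and which, for a finite base group, is equivalent to the lemma itself. (Also, the normal form theorem you use is Theorem 2.7 of the paper, not 2.8; 2.8 is the statement being proved.)

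The paper's proof avoids normal forms entirely and closes your hard case in one stroke. Let $\theta:\Gamma\to G$ be your retraction and $K=\mbox{Ker}\ \theta$. Since $K\unlhd \Gamma$ and $K\cap G=1$, $K$ meets every conjugate of the base group trivially, so by Karrass--Solitar [{\bf 6}, Theorem 6] $K$ is free; having finite index in the finitely generated group $\Gamma$, it is free of finite rank. Thus $\Gamma$ is (free of finite rank)-by-($p$-finite), and such groups are residually $p$-finite: the terms $K_n$ of the lower central $p$-series of $K$ are characteristic in $K$, hence normal in $\Gamma$, each quotient $\Gamma/K_n$ is a finite $p$-group, and $\bigcap_n K_n=1$ because free groups of finite rank are residually $p$-finite. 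Every nontrivial element of $\Gamma$ is therefore detected either in $\Gamma/K\cong G$ or in some $\Gamma/K_n$; in particular all the exponent-sum-zero words that defeated your two homomorphisms are handled at once, with the only nontrivial group-theoretic input (freeness of a normal subgroup meeting the base trivially) imported as a citation rather than rebuilt by hand.
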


\begin{proof} There is a homomorphism $\theta:\Gamma \to G$ such that $\theta(g)=g$ for all $g\in G$ and $\theta(t)=1$. Since $\mbox{Ker}\ \theta\cap G=1$, it follows from [{\bf 6}, Theorem 6] that $\mbox{Ker}\ \theta$ is free. Hence $\Gamma$ is an extension of a free group of finite rank by a $p$-finite group and, therefore, residually $p$-finite.
\end{proof}

\begin{lemma} Assume $G$ is a group and $H\leq_{c(p)} G$ for every prime $p$ . 
 Define the group $\Gamma$ by
$$\Gamma=\langle G,t\ |\  t^{-1}ht=h\ \mbox{for all $h\in H$} \rangle.$$
If $G$ is residually $p$-finite, then $\Gamma$ is also residually $p$-finite.
\end{lemma}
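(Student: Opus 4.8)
The plan is to show that $\Gamma$ is residually $p$-finite by taking an arbitrary nontrivial element $w\in\Gamma$ and constructing a $p$-finite quotient of $\Gamma$ in which its image survives. By Theorem 2.8, I may write $w$ in the normal form $w=g_0t^{\epsilon_1}g_1\cdots t^{\epsilon_n}g_n$ arising from a reduced sequence. I would split into two cases according to whether $n=0$ or $n\geq 1$. When $n=0$, we have $w=g_0\in G-\{1\}$, and since $G$ is residually $p$-finite by hypothesis, there is a $p$-finite quotient $G/N$ in which the image of $g_0$ is nontrivial; to push this to $\Gamma$ I would invoke a retraction-type argument analogous to Lemma 2.3, producing $M\unlhd_{o(p)}\Gamma$ with $M\cap G\leq N$, so that $w\notin M$.

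The substantive case is $n\geq 1$. Here Theorem 2.8(ii) already guarantees $w\neq 1$ in $\Gamma$, but I need a $p$-finite quotient detecting this. The strategy is to produce $M\unlhd_{o(p)} G$ that is stable under $\phi$ (here $\phi$ is the identity on $H$, so I need $M$ compatible with $H$) in such a way that the induced map $G\to G/M$ extends to an HNN extension of the finite $p$-group $G/M$, and then apply Lemma 2.9. Concretely, since $H\leq_{c(p)} G$, for any given $N\unlhd_{o(p)} H$ I can find $M\unlhd_{o(p)} G$ with $M\cap H\leq N$; the key point of the hypothesis $H\leq_{c(p)} G$ is exactly that $H$ is an intersection of open normal subgroups, so the images $HM/M$ behave correctly inside $G/M$. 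Passing to the quotient $\Gamma\twoheadrightarrow \bar\Gamma=\langle G/M, t \mid t^{-1}\bar h t=\bar h\ \text{for}\ \bar h\in HM/M\rangle$, which is an HNN extension of the finite $p$-group $G/M$, I would use Lemma 2.9 to conclude $\bar\Gamma$ is residually $p$-finite; it then suffices to choose the $M$'s finely enough that the image of $w$ in $\bar\Gamma$ remains a reduced word of length $n\geq 1$, hence nontrivial in $\bar\Gamma$ by Theorem 2.8(ii) applied to $\bar\Gamma$, after which residual $p$-finiteness of $\bar\Gamma$ finishes the job.

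The main obstacle, as I see it, lies in ensuring that the image of the reduced word $w$ in $\bar\Gamma$ stays reduced. Reducedness fails precisely when some $\bar g_i$ lands in the associated subgroup $HM/M$ while being flanked by opposite powers of $t$; so for each of the finitely many indices $i$ with $g_i\notin H$, I must arrange that $g_i$ does not fall into $HM/M$, i.e. that $g_i\notin HM$. This is where $H\leq_{c(p)} G$ does the real work: since $H$ is closed in the pro-$p$ topology and $g_i\notin H$, there exists an open normal subgroup separating $g_i$ from $H$, and by intersecting over the finitely many offending indices I obtain a single $M\unlhd_{o(p)} G$ with $g_i\notin HM$ for every such $i$. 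With this choice the image of $w$ is genuinely reduced of length $n$ in $\bar\Gamma$.

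Having assembled these pieces, the argument closes as follows: the chosen quotient $\bar\Gamma$ is an HNN extension of a finite $p$-group over a subgroup, so Lemma 2.9 applies and $\bar\Gamma$ is residually $p$-finite; the image $\bar w$ of $w$ in $\bar\Gamma$ is nontrivial by the reducedness just secured together with Theorem 2.8(ii); therefore some further $p$-finite quotient of $\bar\Gamma$ separates $\bar w$ from the identity, and composing $\Gamma\twoheadrightarrow\bar\Gamma$ with that quotient exhibits a $p$-finite quotient of $\Gamma$ detecting $w$. Since $w$ was arbitrary, $\Gamma$ is residually $p$-finite. I expect the bookkeeping that simultaneously handles all the finitely many reducedness constraints to be the only delicate point; everything else is a direct appeal to the earlier lemmas and the normal-form theorem.
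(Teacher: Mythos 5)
Your proposal is correct and follows essentially the same route as the paper's proof: use the closedness of $H$ in the pro-$p$ topology to choose a single $N\unlhd_{o(p)} G$ so that each $g_i\notin H$ satisfies $g_i\notin NH$ (keeping the image word reduced), pass to the HNN extension of the finite $p$-group $G/N$ with associated subgroup $NH/N$, and finish via the normal-form theorem together with the preceding lemma that such HNN extensions of finite $p$-groups are residually $p$-finite. The only differences are cosmetic: the paper treats $n=0$ and $n\geq 1$ uniformly by imposing the extra condition $g_0\notin N$ on the same subgroup $N$ (instead of your separate retraction argument for $n=0$), and your internal references are shifted by one (the normal-form theorem is Theorem 2.7 and the finite $p$-group lemma is Lemma 2.8); also note that $H\leq_{c(p)}G$ gives only the separation property $g\notin H\Rightarrow g\notin HM$ for some $M\unlhd_{o(p)}G$ --- not the stronger embedding property $M\cap H\leq N$ you mention in passing --- but your actual argument uses only the former, so nothing is affected.
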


\begin{proof} Let $x\in \Gamma-\{1\}$. We need to find a $p$-finite quotient of $\Gamma$ in which the image of $x$ is nontrivial. Let 
$$x=g_0t^{\epsilon_1}g_1t^{\epsilon_2}\cdots t^{\epsilon_n}g_n,$$
where $g_0, t^{\epsilon_1}, g_1, t^{\epsilon_2}, \cdots ,t^{\epsilon_n}, g_n$ is a reduced sequence. Since $H\leq_{c(p)} G$, $G$ contains a normal subgroup $N$ of $p$-power index satisfying the following two properties:

(1) if $g_0\neq 1$, then $g_0\notin N$; 

(2) for each $i=0,\cdots , n$ with $g_i\notin H$,  $g_i\notin NH$. 

\noindent Now define the group $\Sigma$ by
$$\Sigma = \langle G/N, \bar{t}\ |\ \bar{t}^{-1}(Nh)\bar{t}=Nh\ \mbox{for all $h\in H$}\rangle.$$
The group $\Sigma$ is an HNN extension with base group $G/N$ and associated subgroup $NH/N$. 
Moreover, there exists an epimorphism $\theta:\Gamma \to \Sigma$ that maps $G$ canonically onto $G/N$ and $t$ to $\bar{t}$. Also, 
$\theta(g_0), \bar{t}^{\epsilon_1}, \theta(g_1), \bar{t}^{\epsilon_2}, \cdots , \bar{t}^{\epsilon_n}, \theta(g_n)$ is a reduced sequence. Hence, if $n\geq 1$,  $\theta(x)\neq 1$ by Theorem 2.7(ii), and, if $n=0$, then $\theta(x)=\theta(g_0)\neq 1$.  However, by  Lemma 2.8, $\Sigma$ is residually $p$-finite; hence we can find a $p$-finite quotient of $\Gamma$ in which the image of $\theta(x)$ is nontrivial, thus completing the proof. 

\end{proof}

 Lemma 2.9 is the key tool we will employ to prove that right-angled Artin groups are residually $p$-finite. However, first we need to show that the subgroup generated by any subset of the generating set of a right-angled Artin group is closed with respect to the pro-$p$ topology.
 
  \begin{lemma} If $G$ is a right-angled Artin group with generating set $X$, then, for every nonempty subset $X'$ of $X$, $\langle X'\rangle\leq_{c(p)} G$. 
\end{lemma}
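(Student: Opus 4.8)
The plan is to prove Lemma 2.11 by induction on $|X|$, mirroring closely the structure of the proof of Lemma 2.5. The base case $|X|=1$ is trivial, since a cyclic group is closed in its own pro-$p$ topology. For the inductive step, I would fix a generator $x\in X$ and, exactly as in Lemma 2.5, decompose $G$ as an HNN-type extension: set $H=\langle X-\{x\}\rangle$ with all the relators of $G$ not involving $x$, let $Y\subseteq X-\{x\}$ be the generators commuting with $x$, and write $G=\langle H,x\mid [x,y]=1\ \text{for all }y\in Y\rangle$. If $x\notin X'$, then $X'\subseteq X-\{x\}$ and I would try to show closedness of $\langle X'\rangle$ in $G$ using the retraction $\theta\colon G\to H$ sending $x\mapsto 1$ and fixing $H$; by the inductive hypothesis $\langle X'\rangle\leq_{c(p)}H$, and pulling back a defining open subgroup under $\theta$ should exhibit $\langle X'\rangle$ as closed in $G$.

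The genuinely new difficulty is the case $x\in X'$, which did not arise symmetrically in Lemma 2.5 because there one only needed a single open subgroup separating a point, whereas here I must verify that $\langle X'\rangle$ is closed, i.e. equal to the intersection of all the open (pro-$p$) subgroups of $G$ containing it. First I would isolate the subgroup $\langle X'-\{x\}\rangle$, which by induction is both $\leq_{t(p)}$ (Lemma 2.5) and $\leq_{c(p)}$ in $H$. The key structural observation I would aim to establish is that $\langle X'\rangle$ is itself the HNN-type extension of $\langle X'-\{x\}\rangle$ with stable letter $x$ acting trivially on the subgroup generated by $X'\cap Y$, so that $\langle X'\rangle$ sits inside $G$ in a way compatible with the ambient HNN structure. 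Concretely, I expect to take an element $g\in G-\langle X'\rangle$, write it in reduced form relative to the HNN decomposition via Theorem 2.7, and produce a $p$-power-index normal subgroup of $G$ that separates $g$ from $\langle X'\rangle$; the topological $p$-embeddedness from Lemma 2.5 is what lets me choose the separating subgroup of the base to interact correctly with the stable letter.

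The main obstacle I anticipate is controlling the interaction between the stable letter $x$ and the amalgamated/associated subgroup when $x\in X'$: I need the pro-$p$ topology on $\langle X'\rangle$ to be induced from that of $G$ uniformly over all reduced-word lengths, not merely pointwise separation as in Lemma 2.5. The cleanest route is probably to reduce the $x\in X'$ case to the $x\notin X'$ case together with Lemma 2.5 by a clever choice of complementary retraction, or alternatively to invoke the fact (to be used in the following Lemma 2.9 and the residual-$p$ argument) that closed subgroups of the base remain closed after forming $\Gamma$. I would therefore try to phrase the induction so that the HNN extension $G$ over $H$ falls under the umbrella of the trivial-action HNN extensions of Lemmas 2.3/2.8, and then transport closedness of $\langle X'-\{x\}\rangle$ in $H$ up to closedness of $\langle X'\rangle=\langle X'-\{x\}\rangle\cdot\langle x\rangle$-type subgroup in $\Gamma$, using that the associated subgroup $\langle X'\cap Y\rangle$ is closed. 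If that transport step goes through, the conclusion $\langle X'\rangle\leq_{c(p)}G$ follows and the induction closes.
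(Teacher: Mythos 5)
Your overall skeleton (induction on $|X|$, the HNN decomposition $G=\langle H,x\mid [x,y]=1\ \text{for all } y\in Y\rangle$, reduced forms via Theorem 2.7) matches the paper's, but the one step you actually commit to in detail is the step that fails. In the case $x\notin X'$ you propose to deduce $\langle X'\rangle\leq_{c(p)}G$ by pulling back along the retraction $\theta\colon G\to H$, $x\mapsto 1$. This cannot work: $\theta^{-1}(\langle X'\rangle)=\langle X'\rangle\,\mathrm{Ker}\,\theta$, which strictly contains $\langle X'\rangle$, so what pulling back gives you is closedness of $\langle X'\rangle\,\mathrm{Ker}\,\theta$, not of $\langle X'\rangle$. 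Concretely, the element $g=x$ lies outside $\langle X'\rangle$ but satisfies $\theta(g)=1\in\langle X'\rangle$, so no open subgroup of $G$ of the form $\theta^{-1}(N)$ can separate $g$ from $\langle X'\rangle$; the same failure occurs for any $g$ with $\theta(g)\in\langle X'\rangle$, e.g.\ $g=x^{-1}ax$ with $a\in\langle X'\rangle$ not commuting with $x$. This is exactly why the paper does \emph{not} kill the stable letter. Its proof takes the given $g\notin\langle X'\rangle$ in reduced form, chooses $N\unlhd_{o(p)}H$ using the inductive hypothesis applied to \emph{both} $\langle X'\rangle$ and the associated subgroup $\langle Y\rangle$ (the two separation conditions are tailored to the reduced word), passes to the HNN extension $\Gamma=\langle H/N,\bar x\mid [\bar x,Ny]=1\ \text{for all } y\in Y\rangle$ over the finite $p$-group $H/N$, uses Theorem 2.7(ii) to see that the image of $g$ avoids the image of $\langle X'\rangle$ (a reduced word of positive length cannot equal an element of the base group, and the length-zero case is handled by the choice of $N$), and finally invokes Lemma 2.8 for the residual $p$-finiteness of $\Gamma$. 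None of this machinery appears in your treatment of the case you labelled easy.

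Second, your case analysis is inverted. You fix $x$ first and then spend most of the proposal on the case $x\in X'$, but in the induction you are free to choose $x$ \emph{after} seeing $X'$: when $X'=X$ the statement is trivial ($G$ is closed in itself), and when $X'\neq X$ you simply pick $x\in X-X'$, exactly as in Lemma 2.5. So the ``genuinely new difficulty'' you identify never arises, and the speculative transport argument you sketch for it is not needed. The genuine new difficulty, compared with Lemma 2.5, is not an extra case but the need for Britton's lemma (Theorem 2.7(ii)) and Lemma 2.8 in the very case you dismissed with the retraction.
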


\begin{proof} The proof is by induction on the cardinality of $X$, the case $|X|=1$ being trivial. Assume $|X|>1$. If $X'=X$, then the conclusion follows at once. Assume $X'\neq X$, and let $x\in X-X'$.  Define $H$ to be the group generated by $X-\{ x \}$ with all of the same relators as $G$ except those involving $x$. Furthermore, let $Y$ be the set of all elements in $X-\{ x\}$ that commute with $x$ in $G$. Then
$$G=\langle H, x\ |\ [x,y]=1\ \mbox{for all $y\in Y$}\rangle,$$
making $G$ an HNN extension with base group $H$ and associated subgroup $\langle Y\rangle$. Let $g\in G-\langle X'\rangle$. We need to find a quotient of $G$ that is a finite $p$-group in which the image of $g$ is not contained in the image of $\langle X'\rangle$. 
Let 
$$g=h_0t^{\epsilon_1}h_1t^{\epsilon_2}\cdots t^{\epsilon_n}h_n,$$
where $h_0, t^{\epsilon_1}, h_1, t^{\epsilon_2}, \cdots ,t^{\epsilon_n}, h_n$ is a reduced sequence. By the inductive hypothesis, both $\langle X'\rangle$ and $\langle Y\rangle$ are closed subgroups of $H$ with respect to the pro-$p$ topology on $H$. Therefore, $H$ contains a normal subgroup $N$ of $p$-power index satisfying the following two properties:

(1) if $n=0$, then $h_0\notin N\langle X'\rangle$; 

(2) for each $i=0,\cdots , n$ with $h_i\notin \langle Y\rangle$,  $h_i\notin N\langle Y\rangle$. 

\noindent Now form the following group:
$$\Gamma = \langle H/N, \bar{x}\ |\ [\bar{x},Ny]=1\ \mbox{for all $y\in Y$}\rangle.$$
This group is an HNN extension with base group $H/N$ and associated subgroup $N\langle Y\rangle/N$. Thus, in view of Lemma 2.8, $\Gamma$ is residually $p$-finite. Let  $\theta:G\to \Gamma$ be the homomorphism that maps $H$ canonically onto $H/N$ and $x$ to $\bar{x}$. Then
$\theta(h_0), \bar{x}^{\epsilon_1}, \theta(h_1), \bar{x}^{\epsilon_2}, \cdots , \bar{x}^{\epsilon_n}, \theta(h_n)$ is a reduced sequence. We claim that $\theta(g)\notin \theta(\langle X'\rangle )$; it will then follow immediately from the residual $p$-finiteness of $\Gamma$ that there is a $p$-finite quotient of $G$ in which the image of $g$ is not contained in the image of $\langle X'\rangle$. 
If $n=0$, property (1) above implies our claim immediately.  Furthermore, for the case $n>0$ the claim follows from Theorem 2.7(ii).  

\end{proof}

Now we are prepared to prove that right-angled Artin groups are residually $p$-finite.

\begin{theorem} A right-angled Artin group is residually $p$-finite for every prime $p$.
\end{theorem}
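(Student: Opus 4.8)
The plan is to mirror the proof of Theorem 2.6, inducting on the number of generators and realizing the group as an HNN extension to which Lemma 2.9 applies. The base case $|X|=1$ is immediate, since then $G\cong\mathbb Z$, which is plainly residually $p$-finite for every prime $p$.

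For the inductive step, let $G$ be a right-angled Artin group on a generating set $X$ with $|X|>1$, and suppose every right-angled Artin group on fewer generators is residually $p$-finite for all $p$. If $G$ has no relators, then it is free of finite rank, and I would simply invoke the standard fact that finitely generated free groups are residually $p$-finite for every $p$. Otherwise $G$ has a relator $[x_0,y_0]$, and I would list all the relators involving $y_0$ as $[x_0,y_0],\dots,[x_l,y_0]$ and set $H$ to be the right-angled Artin group generated by $X-\{y_0\}$ with all the relators of $G$ that do not involve $y_0$. By the inductive hypothesis $H$ is residually $p$-finite for every $p$.

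The key structural observation is that
$$G=\langle H,y_0\ |\ y_0^{-1}x_iy_0=x_i\ \mbox{for $i=0,\dots,l$}\rangle,$$
so that, writing $Y=\{x_0,\dots,x_l\}$, the group $G$ is precisely an HNN extension of the form treated in Lemma 2.9, with base group $H$, stable letter $y_0$, and associated subgroup $\langle Y\rangle$ fixed pointwise. Because $y_0$ was chosen to appear in at least one relator, $Y$ is a \emph{nonempty} subset of the generating set $X-\{y_0\}$ of $H$, so Lemma 2.10 applies to $H$ and gives $\langle Y\rangle\leq_{c(p)}H$ for every prime $p$. With $H$ residually $p$-finite for every $p$ and $\langle Y\rangle$ closed in the pro-$p$ topology of $H$ for every $p$, Lemma 2.9 yields at once that $G$ is residually $p$-finite for every $p$, completing the induction.

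I do not anticipate a genuine obstacle, since Lemmas 2.9 and 2.10 have been engineered precisely for this argument; the only point demanding care is the selection of $y_0$. Choosing $y_0$ to be a generator occurring in some relator guarantees $Y\neq\varnothing$, which is what permits an appeal to Lemma 2.10 (stated only for nonempty subsets) without a degenerate case. Had $y_0$ commuted with no other generator, the associated subgroup would be trivial, and one would instead need to note directly that the trivial subgroup is closed in the pro-$p$ topology of the residually $p$-finite group $H$; disposing of the free, relator-free possibility at the outset sidesteps this minor subtlety.
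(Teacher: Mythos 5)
Your proof is correct and follows essentially the same route as the paper's: induction on $|X|$, realizing $G$ as an HNN extension over $H$ with associated subgroup $\langle x_0,\dots,x_l\rangle$ fixed pointwise, and then applying Lemmas 2.10 and 2.9. Your additional remark that choosing $y_0$ from an actual relator guarantees the associated subgroup is nonempty (so Lemma 2.10 applies as stated) is a careful touch the paper leaves implicit, but it does not change the argument.
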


\begin{proof}
The proof is by induction on the number of generators, the case of one generator being trivial.  Let $G$ be a right-angled Artin group with generating set $X$ containing more than one element, and assume that every right-angled Artin group with fewer generators than $G$ is residually $p$-finite.  If $G$ has no relators, then it is free and thus residually $p$-finite. Suppose $G$ has at least one relator, and take $[x_0,y_0]$ to be one of its relators. Let $[x_0,y_0], [x_1,y_0], \cdots , [x_l,y_0]$ be a list of all the relators that involve $y_0$. 
 Now define $H$ to be the group generated by $X-\{y_0\}$ with all of the same relators as $G$ except those involving $y_0$.   Because of the inductive hypothesis, $H$ must be residually $p$-finite. Moreover, 
$$G=\langle H, y_0\ |\ [x_0,y_0]=[x_1,y_0]=\cdots =[x_l, y_0]=1\rangle.$$
By Lemma 2.10, the subgroup of $H$ generated by $x_0,\cdots , x_l$ is closed in $H$ with respect to the pro-$p$ topology. Therefore, invoking Lemma 2.9, we can conclude that $G$ is residually $p$-finite .
\end{proof}

\section{Free products with amalgamation}

In this section we prove that certain free products with amalgamated subgroup are in $\mathcal{C}$. Our arguments are based on the Mayer-Vietoris sequence  for free products with amalgam, in both its discrete and pro-$p$ versions. We begin by recalling some facts about pro-$p$ free products with amalgamation from \cite{profinite}. If $\Gamma_1$ and $\Gamma_2$ are pro-$p$ groups with a common closed subgroup $\Delta$, then we can always form the pro-$p$ free product of $\Gamma_1$ and $\Gamma_2$ with amalgamated subgroup $\Delta$; this is the pushout of $\Gamma_1$ and $\Gamma_2$ over $\Delta$ in the category of pro-$p$ groups. If $\Gamma_1$, $\Gamma_2$ and $\Delta$ are all embedded in this pushout, which is by no means always the case, then the latter is referred to as a {\it proper} pro-$p$ free product with amalgamation. Associated to a proper pro-$p$ free product with amalgamation is a  Mayer-Vietoris sequence that relates the cohomologies of the various groups to one another. Like the HNN version, it is really a special case of the Mayer-Vietoris sequence associated to a finite graph of pro-$p$ groups, mentioned in the previous section

 We are interested in the special situation when $\Gamma_1=\hat{G}_1^p$, $\Gamma_2=\hat{G}_2^p$ and $\Delta=\hat{H}^p$, where $G_1$ and $G_2$ are discrete groups with a shared topologically $p$-embedded subgroup $H$. In this case, the pro-$p$ completion of $G=G_1\ast _H G_2$ is the pro-$p$ free product of $\hat{G}_1^p$ and $\hat{G}_2^p$ with amalgamated subgroup $\hat{H}^p$. Moreover, this pro-$p$ free product with amalgam is proper if and only if both $G_1$ and $G_2$ are topologically $p$-embedded in $G$. In this case, we have a Mayer-Vietoris sequence for $\hat{G}^p$ relating the cohomologies of $\hat{G}_1^p$, $\hat{G}_2^p$ and $\hat{H}^p$. This sequence is described in the following theorem, which also illuminates the connection to the discrete Mayer-Vietoris sequence for $G$.

 \begin{theorem} Let $G_1$ and $G_2$ be groups with a common subgroup $H$ that is topologically 
 $p$-embedded in both groups, and let $G=G_1\ast_H G_2$.  Assume, further, that $G_1$ and $G_2$ are both topologically $p$-embedded in $G$. Then, for each positive integer $n$, we have a commutative diagram
 \begin{equation}   \begin{CD}
H^{n-1}(\hat{G}_1^p,A)\oplus H^{n-1}(\hat{G}_2^p,A)   @>>> H^{n-1}(G_1,A)\oplus H^{n-1}(G_2,A) \\
@VVV @VVV \\
H^{n-1}(\hat{H}^p,A) @>>> H^{n-1}(H,A) \\
@VVV @VVV\\
H^n(\hat{G}^p,A) @>>>  H^n(G,A) \\
@VVV @VVV \\
H^n(\hat{G}_1^p,A)\oplus H^n(\hat{G}_2^p,A)   @>>> H^n(G_1,A)\oplus H^n(G_2,A) \\
@VVV @VVV\\
H^n(\hat{H}^p,A) @>>> H^n(H,A), 
\end{CD} \end{equation}
in which the columns are exact and the horizontal maps are induced by the pro-$p$ completion maps for $G_1$, $G_2$, $H$ and $G$.
\end{theorem}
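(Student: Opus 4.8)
The plan is to realize both columns of the diagram as the long exact cohomology sequences attached to one short exact sequence of coefficient modules --- the augmented chain complex of the Bass-Serre tree in the discrete case, and of its pro-$p$ analogue in the pro-$p$ case --- and then to exhibit the completion maps as a morphism between these two sequences. Existence and exactness of the columns is essentially in hand: the discrete Mayer-Vietoris sequence for $G=G_1\ast_H G_2$ is classical, and the pro-$p$ Mayer-Vietoris sequence exists precisely because the hypotheses $H\leq_{t(p)}G_1$, $H\leq_{t(p)}G_2$ and $G_1,G_2\leq_{t(p)}G$ guarantee, as recalled just before the statement, that $\hat{G}^p$ is the \emph{proper} pro-$p$ free product of $\hat{G}_1^p$ and $\hat{G}_2^p$ amalgamated over $\hat{H}^p$; the sequence is then supplied by \cite{profinite} and \cite{protrees}. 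Thus the real content of the theorem is the commutativity of the horizontal (completion-induced) arrows with the vertical differentials.

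To produce the two columns uniformly, first I would present the discrete column as the long exact sequence obtained by applying $\mathrm{Ext}^\ast_{\mathbb{Z}G}(-,A)$ to the short exact sequence of $\mathbb{Z}G$-modules
\begin{equation*}
0\longrightarrow \mathbb{Z}[G/H]\longrightarrow \mathbb{Z}[G/G_1]\oplus \mathbb{Z}[G/G_2]\longrightarrow \mathbb{Z}\longrightarrow 0,
\end{equation*}
which is exact because the underlying tree is contractible; Shapiro's lemma $\mathrm{Ext}^n_{\mathbb{Z}G}(\mathbb{Z}[G/L],A)\cong H^n(L,A)$ then identifies the result with the right column. On the pro-$p$ side I would use the parallel short exact sequence of profinite $\hat{\mathbb{Z}}^p[[\hat{G}^p]]$-modules
\begin{equation*}
0\longrightarrow \hat{\mathbb{Z}}^p[[\hat{G}^p/\hat{H}^p]]\longrightarrow \hat{\mathbb{Z}}^p[[\hat{G}^p/\hat{G}_1^p]]\oplus \hat{\mathbb{Z}}^p[[\hat{G}^p/\hat{G}_2^p]]\longrightarrow \hat{\mathbb{Z}}^p\longrightarrow 0,
\end{equation*}
exact by properness of the amalgam, together with the pro-$p$ Shapiro lemma, to recover the left column.

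Then I would build the comparison. The completion maps $c_G\colon G\to\hat{G}^p$, $c_{G_i}\colon G_i\to\hat{G}_i^p$ and $c_H\colon H\to\hat{H}^p$ are compatible with all the inclusions, and, because each subgroup is topologically $p$-embedded, they induce maps of homogeneous spaces $G/L\to\hat{G}^p/\hat{L}^p$. Viewing the pro-$p$ modules as $\mathbb{Z}G$-modules through the algebra map $\mathbb{Z}G\to\hat{\mathbb{Z}}^p[[\hat{G}^p]]$, these assemble into a morphism between the two coefficient sequences; applying $\mathrm{Ext}^\ast_{\mathbb{Z}G}(-,A)$ together with the change-of-rings comparison from continuous to discrete cohomology, and using the naturality of Shapiro's lemma, yields the commutative ladder. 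A term-by-term check confirms that the horizontal arrows it produces are precisely the maps induced by the completion homomorphisms $H^n_{\mathrm{cont}}(\hat{L}^p,A)\to H^n(L,A)$.

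The hardest part will be this interface between continuous and discrete homological algebra: one must verify that Shapiro's lemma, the change-of-rings maps and the connecting homomorphisms are all genuinely natural across the completion, so that the pro-$p$ differentials are carried to the discrete ones. Rather than checking this by explicit cocycle manipulation, I expect the cleanest route is to observe that both Mayer-Vietoris sequences are instances of the single sequence associated with the fundamental group of a finite graph of (pro-$p$) groups, and that the completion furnishes a morphism of the corresponding graph-of-groups data; the required commutativity is then inherited from the naturality of that general construction, as developed in \cite{protrees}.
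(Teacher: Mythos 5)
Your proposal is correct and follows essentially the same route as the paper, which in fact states Theorem 3.1 without any proof, deferring---just as you do---to the Mayer--Vietoris sequence for a proper pro-$p$ free product with amalgamation supplied by \cite{profinite} and \cite{protrees}, with properness guaranteed by the topological $p$-embedding hypotheses recalled immediately before the statement. The only slight misattribution is your closing suggestion that the naturality \emph{across the completion} is itself developed in \cite{protrees} (that reference treats only the pro-$p$ side); but the change-of-rings/Shapiro comparison of coefficient sequences that you describe earlier already supplies exactly that verification, so nothing essential is missing.
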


The above theorem allows us to stipulate a set of conditions under which a free product with amalgamation lies in the class $\mathcal{C}$. 

\begin{corollary} Let $G_1$ and $G_2$ be groups with a common subgroup $H$ that is topologically $p$-embedded in both groups for each prime $p$. Furthermore, assume $G_1$ and $G_2$ are both topologically $p$-embedded in $G=G_1\ast_H G_2$ for every $p$. If $G_1$, $G_2$ and $H$ are all in $\mathcal{C}$, then $G$ also belongs to $\mathcal{C}$.
\end{corollary}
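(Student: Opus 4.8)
The plan is to obtain the corollary directly from Theorem 3.1 by a five-lemma argument, in exact parallel with the derivation of Corollary 2.2 from Theorem 2.1. I would set $A=\mathbb{Z}/p$ and fix a prime $p$ together with a positive integer $n$. The two topological $p$-embedding hypotheses are precisely those demanded by Theorem 3.1: $H\leq_{t(p)}G_1$ and $H\leq_{t(p)}G_2$ guarantee that $\hat{G}^p$ is the pro-$p$ free product of $\hat{G}_1^p$ and $\hat{G}_2^p$ amalgamating $\hat{H}^p$, while $G_1\leq_{t(p)}G$ and $G_2\leq_{t(p)}G$ guarantee that this free product is proper. Hence Theorem 3.1 furnishes the commutative diagram (3.2) with coefficients in $\mathbb{Z}/p$, whose two columns are exact Mayer-Vietoris sequences and whose horizontal maps are induced by the completion maps.

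Next I would identify the four flanking horizontal arrows as isomorphisms. Since $G_1$ and $G_2$ lie in $\mathcal{C}$, the arrows $H^{m}(\hat{G}_1^p,\mathbb{Z}/p)\oplus H^{m}(\hat{G}_2^p,\mathbb{Z}/p)\to H^{m}(G_1,\mathbb{Z}/p)\oplus H^{m}(G_2,\mathbb{Z}/p)$ are isomorphisms for $m=n-1$ and $m=n$; and since $H\in\mathcal{C}$, the arrows $H^{m}(\hat{H}^p,\mathbb{Z}/p)\to H^{m}(H,\mathbb{Z}/p)$ are isomorphisms for those same values of $m$. These are exactly the first, second, fourth, and fifth horizontal maps in (3.2), that is, the four that flank the central map $H^n(\hat{G}^p,\mathbb{Z}/p)\to H^n(G,\mathbb{Z}/p)$. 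Viewing the two columns as a morphism of exact sequences, the five lemma then forces this central map to be an isomorphism as well. Since this holds for every $n\geq 1$, since the case $n=0$ is automatic, and since $p$ was arbitrary, we conclude that $G\in\mathcal{C}$.

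I expect no serious obstacle, as the substantive content has already been absorbed into Theorem 3.1; the corollary is a formal consequence. The only point requiring attention is bookkeeping: confirming that the hypotheses line up exactly with the properness requirement that makes the columns of (3.2) exact, and that the arrows controlled by the $\mathcal{C}$-membership of $G_1$, $G_2$, and $H$ are precisely the four needed to run the five lemma. This is the same pattern already executed in Corollary 2.2, now with a five-term column in place of the five-term row.
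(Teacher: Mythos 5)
Your proposal is correct and matches the paper's own argument: the paper likewise notes that the first, second, fourth, and fifth horizontal maps in the Mayer--Vietoris diagram of Theorem 3.1 are isomorphisms (by the $\mathcal{C}$-membership of $G_1$, $G_2$, and $H$) and invokes the five lemma to conclude the middle map is an isomorphism. The only discrepancy is trivial bookkeeping: the diagram is labeled (3.1) in the paper, not (3.2).
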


\begin{proof}  The first, second, fourth and fifth horizontal maps in (3.1) are isomorphisms, forcing the third to be one as well. 
\end{proof}

The catch with Corollary 3.2 is that it is difficult to show that the factors in a free product with amalgamation are topologically $p$-embedded. However, an important situation in which this holds is described in the following lemma, similar to [{\bf 14}, Theorem 3.1]. 

\begin{lemma} Let $G_1$ and $G_2$ be groups with a common subgroup $H$. Assume that, for each pair $\{N_1, N_2\}$ with $N_i\unlhd_{o(p)} G_i$, there exists a pair $\{P_1, P_2\}$ such that $P_i\unlhd_{o(p)} G_i$, $P_i\leq N_i$ and $P_1\cap H=P_2\cap H$. Then $G_1$ and $G_2$ are topologically $p$-embedded in $G_1\ast_H G_2$. 
\end{lemma}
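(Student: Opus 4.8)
The plan is to verify the concrete criterion (ii) from the Notation section. To prove $G_1\leq_{t(p)} G$, where $G=G_1\ast_H G_2$, it suffices to show that for each $N_1\unlhd_{o(p)} G_1$ there exists $M\unlhd_{o(p)} G$ with $M\cap G_1\leq N_1$; the argument for $G_2$ is symmetric. So I would fix $N_1\unlhd_{o(p)} G_1$ and apply the hypothesis to the pair $\{N_1,G_2\}$, obtaining $P_1\unlhd_{o(p)} G_1$ and $P_2\unlhd_{o(p)} G_2$ with $P_1\leq N_1$ and $L:=P_1\cap H=P_2\cap H$. Here $L\unlhd_{o(p)} H$, and the images of $H$ in the finite $p$-groups $A:=G_1/P_1$ and $B:=G_2/P_2$ are each canonically isomorphic to $H/L$ (namely $HP_i/P_i\cong H/(H\cap P_i)=H/L$); I will identify these two images and call the resulting common subgroup $C$.

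Next I would form the discrete amalgamated free product $Q:=A\ast_C B$ and use the universal property of $G=G_1\ast_H G_2$ to produce a homomorphism $\psi\colon G\to Q$ that restricts on each factor to the quotient map $G_i\to G_i/P_i$ followed by the inclusion $G_i/P_i\hookrightarrow Q$. These two restrictions agree on $H$ precisely because $P_1\cap H=P_2\cap H$, so $\psi$ is well defined. Since the factors of a discrete amalgamated free product always embed, $A$ sits faithfully inside $Q$, and therefore the restriction of $\psi$ to $G_1$ has kernel exactly $P_1$.

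The crux is then to replace the (generally infinite) group $Q$ by a finite $p$-group without sacrificing faithfulness on $A$. For this I would invoke the key fact that the amalgam $A\ast_C B$ of two finite $p$-groups over a common subgroup is residually $p$-finite. Granting this, since $A$ is finite I can intersect finitely many open normal subgroups of $p$-power index to obtain $U\unlhd_{o(p)} Q$ with $U\cap A=1$. Setting $W:=Q/U$ and $M:=\ker(G\to W)$, where $G\to W$ is $\psi$ composed with the projection $Q\to W$, we get that $[G:M]$ divides $|W|$ and is hence a power of $p$, so $M\unlhd_{o(p)} G$; moreover $M\cap G_1=\ker(G_1\to A\hookrightarrow W)=P_1\leq N_1$ because $A\hookrightarrow W$ is injective. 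This yields $G_1\leq_{t(p)} G$, and applying the hypothesis instead to the pair $\{G_1,N_2\}$ gives $G_2\leq_{t(p)} G$ by the same construction with the indices exchanged.

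I expect the residual $p$-finiteness of the amalgam $A\ast_C B$ to be the main obstacle, since everything else is a formal manipulation of universal properties and the normal-form embedding for discrete amalgams. This is the pro-$p$ analogue of the residual finiteness of amalgams of finite groups that underlies the profinite argument in \cite{kl}, and I would establish it either by citing the properness of the pro-$p$ free product $A\amalg_C B$ of finite $p$-groups over a common subgroup from \cite{profinite} (properness furnishes exactly the required finite $p$-quotients separating the elements of $A$) or by a direct argument using the HNN and normal-form techniques of Section 2, in close parallel with [{\bf 14}, Theorem 3.1].
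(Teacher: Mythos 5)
Your construction is, step for step, the architecture of the paper's own proof: pass to a compatible pair $(P_1,P_2)$, form the amalgam $\bar G=G_1/P_1\ast_C G_2/P_2$ of finite $p$-groups, map $G$ onto it, and produce a finite $p$-quotient of $\bar G$ injective on the factors (your $U$, $W$, $M$ play the roles of the paper's $\epsilon$ and $K=\mathrm{Ker}\,\epsilon\theta$). The difference lies at the step you yourself single out as the crux, and there your proposal breaks down: the ``key fact'' that $A\ast_C B$ is residually $p$-finite for arbitrary finite $p$-groups $A,B$ with an arbitrary common subgroup $C$ is \emph{false}. This is precisely the content of Higman's paper \cite{higman}, quoted in the paper as Theorem 3.4: the positive result holds when $C$ is cyclic, and Higman also constructs finite $p$-groups whose amalgam over a (non-cyclic) common subgroup embeds in \emph{no} finite $p$-group. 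Residual $p$-finiteness of $A\ast_C B$ would yield a finite $p$-quotient injective on the finite subset $A\cup B$ (separate the finitely many nontrivial elements $xy^{-1}$ with $x\neq y$ in $A\cup B$ and intersect the kernels), and since $A\cap B=C$ inside $A\ast_C B$ by the normal form theorem, such a quotient is exactly an embedding of the amalgam into a finite $p$-group---contradicting Higman's example. For the same reason your fallback is unavailable: the pro-$p$ amalgamated free product of two finite $p$-groups need not be proper, so no unrestricted properness theorem can be cited from \cite{profinite}. The analogy you draw with the profinite argument of \cite{kl} and \cite{ribes} is exactly where the danger lies: an amalgam of finite groups always embeds in a finite group, but the $p$-group analogue fails, and this failure is the very reason the theorems of Section 3 carry cyclic or central hypotheses on the amalgam.

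It is only fair to add that the paper's proof is terse at the same spot: it applies Theorem 3.4 to $\bar G$ even though the amalgamated subgroup $P_1H/P_1$ is not assumed cyclic there. But the lemma is only ever invoked where this finite image of $H$ is tractable: in Lemma 3.9 the amalgam is cyclic, so $P_1A/P_1$ is cyclic and Higman's theorem applies verbatim; in Theorem 3.5 the amalgam is central, so $P_1A/P_1$ is central in both factors and the needed finite $p$-group is supplied by the central product of $G_1/P_1$ and $G_2/P_2$ amalgamating the identified central images. So your plan does go through in every case the paper actually needs, but only after your general claim is replaced by one of these restricted embedding results; as a proof of the lemma for arbitrary $H$, the step you flagged as the main obstacle is not one that more work will overcome---it is the point at which an unrestricted statement is genuinely false.
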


To prove the above lemma, we require the following theorem of G. Higman \cite{higman}.

\begin{theorem}{\rm (Higman)} Let $G_1$ and $G_2$ be finite $p$-groups with a common cyclic subgroup $A$. Then $G_1\ast_A G_2$ is residually a finite $p$-group.
\end{theorem}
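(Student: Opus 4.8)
The plan is to reduce the statement to the problem of separating a single nontrivial element from the identity by a homomorphism onto a finite $p$-group, and then to produce such homomorphisms, using the hypothesis that $A$ is \emph{cyclic} at the one point where it is indispensable. Write $G=G_1\ast_A G_2$ and $A=\langle a\rangle$. By the normal form theorem for amalgamated products (the analogue for free products with amalgamation of Theorem 2.7, also contained in \cite{lyndon}), every nontrivial element of $G$ is either conjugate into $G_1$ or $G_2$, or is represented by a reduced word $x_1x_2\cdots x_n$ whose syllables $x_j$ lie alternately in $G_1\setminus A$ and $G_2\setminus A$ with $n\geq 2$. An element conjugate into a factor is detected simply by projecting that finite $p$-group factor onto itself, so the whole task is to detect the reduced words and the elements of $A$.

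First I would dispose of the case in which $A$ is central in both $G_1$ and $G_2$. Here $A$ is central in $G$, its normal closure is $A$ itself, and therefore $G/A\cong (G_1/A)\ast(G_2/A)$ is a free product of finite $p$-groups, which is residually $p$-finite by Gruenberg's theorem on free products of residually-$p$ groups. Consequently every element with nontrivial image in $G/A$ — in particular every reduced word of length $\geq 2$ — survives in a finite $p$-quotient. The remaining elements lie in $A\setminus\{1\}$, and these are separated by the single homomorphism
\[
G\longrightarrow (G_1\times G_2)\big/\{(a,a^{-1}):a\in A\},
\]
whose target is a finite $p$-group and which is injective on $A$. Thus the central case is complete.

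For the general cyclic case I would argue by induction on $|A|$, the base case $A=1$ being the free-product situation just handled. The first tool is that, because $A$ is cyclic, its subgroups are totally ordered; hence the two cores $C_i=\mathrm{Core}_{G_i}(A)$ are comparable, the smaller one is characteristic in the larger and therefore normal in both factors, giving a subgroup $C\leq A$ with $C\unlhd G$. When $C\neq 1$ one passes to $G/C=(G_1/C)\ast_{A/C}(G_2/C)$, whose amalgamated subgroup is a shorter cyclic group, and finishes by the inductive hypothesis together with a further finite $p$-quotient detecting the elements of $C$.

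The main obstacle is precisely the opposite situation, in which both cores are trivial, so that $A$ contains no nontrivial subgroup normal in $G$ and the coset actions of the factors are faithful. Here there is no normalizing trick available, and one must build the separating finite $p$-quotients directly from the action of $G$ on its Bass--Serre tree: one records the action on the finite subtree spanned by the path that the given reduced word traces, and then folds the infinite complement into a finite $p$-group by an iterated wreath construction. The role of the cyclic hypothesis is to guarantee that the attaching data at successive vertices — which are maps between subgroups of conjugates of the \emph{cyclic} group $A$, hence between totally ordered families of subgroups — can be made mutually compatible, so that the fold-up remains a $p$-group and does not collapse the length-$n$ word. Verifying that this construction yields a genuine finite $p$-group together with a well-defined homomorphism respecting the amalgamation relations is the crux; it is exactly the step that must fail for noncyclic amalgamated subgroups, since one can exhibit amalgams of finite $p$-groups over noncyclic subgroups that are not residually $p$-finite.
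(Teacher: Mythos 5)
First, a point of orientation: the paper itself does not prove this statement at all --- it is quoted as Higman's theorem and used as a black box, with a citation to Higman's 1964 paper \emph{Amalgams of $p$-groups}. So your attempt has to stand entirely on its own, and it does not: the gaps sit exactly where the real content of Higman's theorem lies. The most concrete error is the claim that an element conjugate into a factor ``is detected simply by projecting that finite $p$-group factor onto itself.'' An amalgamated product $G_1\ast_A G_2$ admits no retraction onto $G_1$ unless there is a homomorphism $G_2\to G_1$ restricting to the identity on $A$, which in general does not exist; killing the normal closure of $G_2$ also kills $A$, and hence kills every element of the normal closure of $A$ in $G_1$. Producing a finite $p$-quotient of $G$ that is faithful on a factor (equivalently, on $A$) is not a formality --- it is essentially the whole problem. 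The same unproved step reappears in your inductive case $C\neq 1$, where ``a further finite $p$-quotient detecting the elements of $C$'' is asserted but never constructed: nothing in the inductive hypothesis (which concerns amalgams over the \emph{smaller} cyclic group $A/C$) yields a finite $p$-quotient of $G$ injective on $C$.

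The case of trivial cores, which you correctly identify as the main obstacle, is then only described, not proved: you propose to record the action on a finite subtree of the Bass--Serre tree and ``fold'' the complement into a finite $p$-group by an iterated wreath construction, with cyclicity of $A$ ensuring compatibility of the attaching data, and you yourself acknowledge that verifying this ``is the crux.'' That verification is precisely Higman's theorem (and precisely what fails for noncyclic $A$), so leaving it at the level of a plan means the theorem has not been proved. To give due credit: your central case is complete and correct --- the central product $(G_1\times G_2)/\{(a,a^{-1}):a\in A\}$ is a finite $p$-group faithful on $A$, and Gruenberg's theorem on free products handles $G/A\cong (G_1/A)\ast(G_2/A)$ --- and the reduction via the cores $C_i=\mathrm{Core}_{G_i}(A)$, using that the subgroups of a cyclic $p$-group are totally ordered and characteristic, is a genuinely nice observation. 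But these steps only peel away easy cases; the heart of the argument is missing.
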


\begin{proof}[Proof of Lemma 3.3.] Let $G=G_1\ast_H G_2$. Assume $N_1\unlhd_{o(p)} G_1$ and $N_2\unlhd_{o(p)}G_2$. Then there exists a pair $\{P_1, P_2\}$ such that $P_i\unlhd_{o(p)} G_i$, $P_i\leq N_i$ and $P_1\cap H=P_2\cap H$. Since  
$P_1\cap H=P_2\cap H$, $P_1H/P_1\cong P_2H/P_2$. We can then identify these two groups via this isomorphism and form the free product with amalgamation
$$\bar{G} = G_1/ P_1\ast_{P_1H/P_1} G_2/P_2.$$
Moreover, there is an epimorphism $\theta: G\to \bar{G}$ that maps $G_1$ and $G_2$ canonically onto $G_1/ P_1$ and $G_2/ P_2$, respectively. Also, by Higman's theorem, there is an epimorphism $\epsilon$ from $\bar{G}$ onto a finite $p$-group such that the restriction of $\epsilon$ to $G_i/P_i$ is injective. Let $K=\mbox{Ker}\ \epsilon \theta$. Then $K\unlhd_{o(p)} G$ and $K\cap G_i\leq P_i\leq N_i$. Therefore, 
$G_i\leq_{t(p)} G$.
\end{proof}

Unfortunately, it is not easy to recognize when the hypotheses of Lemma 3.3 might be satisfied. Nevertheless, we will discern two important cases where these conditions are fulfilled. The first involves a central amalgam and is treated in the following theorem.

\begin{theorem} Assume $G_1$ and $G_2$ are groups with a shared finitely generated central subgroup $A$.  If  $G_1/A$ and $G_2/A$ both belong to $\mathcal{C}$, then $G_1\ast_A G_2$ is in $\mathcal{C}$.
 \end{theorem}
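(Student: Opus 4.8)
The plan is to verify the four hypotheses of Corollary 3.2 with the amalgamated subgroup taken to be $H=A$ and $G=G_1\ast_A G_2$. Concretely, I must check that $A$, $G_1$ and $G_2$ all lie in $\mathcal{C}$, that $A$ is topologically $p$-embedded in each $G_i$, and---the genuinely delicate point---that each $G_i$ is topologically $p$-embedded in $G$. Once these are in place, Corollary 3.2 delivers the conclusion immediately.

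First I would dispose of the routine items. Since $A$ is central in $G_1$ it is a finitely generated abelian group, so it lies in $\mathcal{C}$ by Corollary 1.3. For each $i$ the extension $A\rightarrowtail G_i\twoheadrightarrow G_i/A$ is central with $A$ finitely generated and $G_i/A\in\mathcal{C}$; hence Proposition 1.1 yields $A\leq_{t(p)} G_i$ for every $p$, and Corollary 1.2 yields $G_i\in\mathcal{C}$. This settles three of the four conditions.

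The main work is to obtain $G_i\leq_{t(p)} G$, and for this I would invoke Lemma 3.3, so the task reduces to checking its combinatorial hypothesis. Fix $p$ and let $N_i\unlhd_{o(p)} G_i$ for $i=1,2$. Set $B=N_1\cap N_2\cap A$. Since $A/(N_i\cap A)\cong AN_i/N_i$ embeds in the finite $p$-group $G_i/N_i$, each $N_i\cap A$ has $p$-power index in $A$, so $B$ is the intersection of two subgroups of $p$-power index in $A$ and therefore $B\unlhd_{o(p)} A$. Using $A\leq_{t(p)} G_i$, choose $M_i\unlhd_{o(p)} G_i$ with $M_i\cap A\leq B$, and put $P_i=(M_i\cap N_i)B$. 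Here the centrality of $A$ is essential: $B$ is a subgroup of the central subgroup $A$, hence normal in $G_i$, so $P_i$ is a product of two normal subgroups and is itself normal, of $p$-power index, and contained in $N_i$ (both $M_i\cap N_i$ and $B$ lie in $N_i$). A short computation then gives $P_i\cap A=B$: the inclusion $B\leq P_i\cap A$ is clear, and conversely if $x=yb\in A$ with $y\in M_i\cap N_i$ and $b\in B$, then $y=xb^{-1}\in A$, so $y\in M_i\cap A\leq B$ and hence $x\in B$. Thus $P_1\cap A=B=P_2\cap A$, which is exactly the condition Lemma 3.3 demands.

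Applying Lemma 3.3 then gives $G_1,G_2\leq_{t(p)} G$ for every $p$, completing the hypotheses of Corollary 3.2, which yields $G\in\mathcal{C}$. The only step that requires real care is the verification of the hypothesis of Lemma 3.3, and within it the decisive use of the centrality of $A$ to ensure the subgroups $P_i$ are normal in $G_i$. Without centrality one could still arrange $P_i\cap A\leq B$, but not the crucial equality $P_1\cap A=P_2\cap A$; this is precisely why the theorem requires the amalgam to be central.
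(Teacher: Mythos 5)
Your proof is correct and follows essentially the same route as the paper: both reduce the theorem to Lemma 3.3 and Corollary 3.2, using Proposition 1.1 and Corollaries 1.2 and 1.3 for the routine hypotheses, and both construct the required pair $\{P_1,P_2\}$ by multiplying a $p$-power-index subgroup of $A$ contained in $N_1\cap N_2$ by a suitable open normal subgroup obtained from $A\leq_{t(p)}G_i$. The only (cosmetic) difference is that the paper first replaces $A\cap N_1\cap N_2$ by its characteristic core in $A$ before forming $P_i=UM_i$, a precaution your observation---that every subgroup of the central subgroup $A$ is automatically normal in $G_i$---shows to be unnecessary.
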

 
 \begin{proof} 
 
By Corollaries 1.2 and 1.3, $G_1$ and $G_2$ are both in $\mathcal{C}$. Our plan is to use Lemma 3.3 to establish that $G_1$ and $G_2$ are topologically $p$-embedded in $G_1\ast_A G_2$; the conclusion of the theorem will then follow by Corollary 3.2. In order to invoke Lemma 3.3, we need to show that, for each pair $\{N_1, N_2\}$ with $N_i\unlhd_{o(p)} G_i$,  there exists a pair $\{P_1,P_2\}$ such that $P_i\unlhd_{o(p)} G_i$, $P_i\leq N_i$ and $P_1\cap A=P_2\cap A$. Assume $N_1\unlhd_{o(p)} G_1$ and $N_2\unlhd_{o(p)} G_2$. Take $U$ to be the intersection of the images of $A\cap N_1\cap N_2$ under all the automorphisms of $A$, keeping in mind that there are only finitely many such images that are distinct. Then $U$ is a normal subgroup of both $G_1$ and $G_2$  contained in $A\cap N_1\cap N_2$, and the index of $U$ in $A$ is a power of $p$. By Proposition 1.1, $A\leq_{t(p)} G_i$. Hence we can find $M_i\unlhd_{o(p)} G_i$ such that $M_i\leq N_i$ and $M_i\cap A\leq U$. Now we let $P_i=UM_i$. Then $P_i\cap A=U(M_i\cap A)=U$; moreover, $P_i\unlhd_{o(p)} G_i$ and $P_i\leq N_i$. Thus we have constructed the desired pair $\{P_1,P_2\}$.
 \end{proof} 

The second situation where the hypotheses of Lemma 3.3 are satisfied is when the amalgamated subgroup is cyclic and the groups are residually $p$-finite. To verify this, we avail ourselves of the following property of residually $p$-finite groups, observed in \cite{kim}.

\begin{lemma} Let $G$ be a group that is residually $p$-finite, and let $a\in G$. Then, for every $n\in \mathbb N$, there exists $N_n\unlhd_{o(p)} G$ such that $N_n\cap \langle a\rangle = \langle a^{p^n}\rangle $.  
\end{lemma}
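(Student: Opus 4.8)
The plan is to reduce the equality $N_n\cap\langle a\rangle=\langle a^{p^n}\rangle$ to a pair of membership conditions and then arrange them inside a suitable finite $p$-quotient of $G$. The guiding observation is that the subgroups of the cyclic group $\langle a\rangle$ containing $\langle a^{p^n}\rangle$ form a chain: each is of the form $\langle a^{p^i}\rangle$ with $0\le i\le n$, and, provided $a^{p^{n-1}}\ne 1$, the smallest one strictly larger than $\langle a^{p^n}\rangle$ is $\langle a^{p^{n-1}}\rangle$. Hence, once $a^{p^n}\in N_n$ is known, the intersection $N_n\cap\langle a\rangle$ is one of these subgroups, and the full equality $N_n\cap\langle a\rangle=\langle a^{p^n}\rangle$ becomes equivalent to the single extra requirement $a^{p^{n-1}}\notin N_n$. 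So, in the regime $a^{p^{n-1}}\ne 1$, it suffices to produce $N_n\unlhd_{o(p)}G$ with $a^{p^n}\in N_n$ and $a^{p^{n-1}}\notin N_n$.

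I would first dispose of the degenerate case $a^{p^{n-1}}=1$ (which includes $a=1$). Here $\langle a\rangle$ is finite and $a^{p^n}=1$, so $\langle a^{p^n}\rangle=1$, and the desired $N_n$ is obtained by intersecting finitely many normal subgroups $N\unlhd_{o(p)}G$ that separate the finitely many nontrivial powers of $a$ from the identity; residual $p$-finiteness guarantees these exist, and the finite intersection lies in $\unlhd_{o(p)}G$ and meets $\langle a\rangle$ trivially. The case $n=0$ is likewise immediate, taking $N_0=G$.

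Now assume $a^{p^{n-1}}\ne 1$. Using residual $p$-finiteness, I would choose an epimorphism $\psi\colon G\to P$ onto a finite $p$-group with $\psi(a^{p^{n-1}})\ne 1$, and set $x=\psi(a)$; then $x$ has order $p^s$ with $s\ge n$. The task is thereby reduced to a purely finite statement about $P$: to find $\bar N\unlhd P$ with $x^{p^n}\in\bar N$ and $x^{p^{n-1}}\notin\bar N$. Given such a $\bar N$, the pullback $N_n=\psi^{-1}(\bar N)$ lies in $\unlhd_{o(p)}G$, since $G/N_n\cong P/\bar N$ is a $p$-group, and it satisfies the two membership conditions because $\psi(a^{p^n})=x^{p^n}$ and $\psi(a^{p^{n-1}})=x^{p^{n-1}}$; the chain argument of the first paragraph then yields $N_n\cap\langle a\rangle=\langle a^{p^n}\rangle$.

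The crux, and the step I expect to be the main obstacle, is this finite $p$-group statement, because residual $p$-finiteness by itself only separates elements from the identity, whereas here one must force $x^{p^n}$ \emph{into} a normal subgroup while keeping $x^{p^{n-1}}$ \emph{outside} it. I would settle it with a maximality argument. Put $y=x^{p^{n-1}}\ne 1$ and choose $\bar N$ maximal among the normal subgroups of $P$ not containing $y$; such a subgroup exists because the collection is nonempty (it contains the trivial subgroup) and $P$ is finite. In the quotient $Q=P/\bar N$ the image $\bar y$ is nontrivial, and by maximality every nontrivial normal subgroup of $Q$ contains $\bar y$ (the preimage of such a subgroup strictly contains $\bar N$, hence contains $y$). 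Since $Q$ is a nontrivial finite $p$-group, its center $Z(Q)$ contains an element of order $p$, whose cyclic span is a nontrivial normal subgroup and therefore contains $\bar y$; as that span has order $p$, it equals $\langle\bar y\rangle$, so $\bar y$ is central of order $p$. In particular $\bar y^p=1$, that is $x^{p^n}=y^p\in\bar N$, while $x^{p^{n-1}}=y\notin\bar N$ by construction. This produces the required $\bar N$ and completes the argument.
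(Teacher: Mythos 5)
Your proof is correct, and while it shares the paper's overall skeleton---use residual $p$-finiteness to pass to a finite $p$-quotient in which $a^{p^{n-1}}$ survives, solve a purely finite problem there, and pull the resulting normal subgroup back to $G$---the key finite $p$-group step is handled by a genuinely different argument. The paper first proves an auxiliary fact (its Lemma 3.7) by induction on the order of the finite $p$-group, factoring out a central subgroup of order $p$ at each step, to show that an element of order $p^m$ admits quotients in which its image has any prescribed order $p^i$ with $i\le m$; the main proof then splits into the cases where $a$ has infinite or finite ($p$-power) order and produces a quotient in which the image of $a$ has order \emph{exactly} $p^n$, whose kernel is the desired $N_n$. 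You instead reduce the subgroup equality to the two membership conditions $a^{p^n}\in N_n$ and $a^{p^{n-1}}\notin N_n$ via the chain structure of the subgroups of $\langle a\rangle$ lying over $\langle a^{p^n}\rangle$, and you settle the finite problem in one stroke with a maximality (monolith) argument: a normal subgroup $\bar N$ of $P$ maximal with respect to avoiding $y=x^{p^{n-1}}$ makes $\langle \bar y\rangle$ the unique minimal normal subgroup of $P/\bar N$, hence central of order $p$, so $y^p\in\bar N$ while $y\notin\bar N$. This buys a shorter, non-inductive argument and largely eliminates the paper's case analysis on the order of $a$. One small imprecision, which does not affect correctness: your opening claim that $\langle a^{p^{n-1}}\rangle$ is strictly larger than $\langle a^{p^n}\rangle$ whenever $a^{p^{n-1}}\ne 1$ can fail if $a$ were a torsion element whose order is divisible by a prime other than $p$; such elements cannot exist in a residually $p$-finite group (they die in every $p$-finite quotient), and in any case the implication your argument actually uses---any subgroup of $\langle a\rangle$ containing $a^{p^n}$ but not $a^{p^{n-1}}$ must equal $\langle a^{p^n}\rangle$---holds unconditionally, so the step self-corrects.
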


The proof of the above lemma is based on the following elementary property of finite $p$-groups.

\begin{lemma} Let $G$ be a finite $p$-group and $a\in G$ with $|a|=p^n$. Then, for every $i=0, \cdots, n$, there exists a quotient of $G$ in which the image of $a$ has order $p^i$.
\end{lemma}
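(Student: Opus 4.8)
The plan is to reformulate the conclusion in terms of finding a suitable normal subgroup. Observe that the image of $a$ in a quotient $G/N$ has order exactly $p^i$ precisely when $N\cap\langle a\rangle=\langle a^{p^i}\rangle$, since in that case the image of $\langle a\rangle$ is $\langle a\rangle/(N\cap\langle a\rangle)\cong\langle a\rangle/\langle a^{p^i}\rangle$, a cyclic group of order $p^i$ generated by the image of $a$. Thus it suffices to show that for every $0\le i\le n$ there exists $N\unlhd G$ with $N\cap\langle a\rangle=\langle a^{p^i}\rangle$.

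For the two extreme cases this is immediate: $i=n$ is handled by $N=1$ and $i=0$ by $N=G$. For $0<i<n$ I would argue by induction on $|G|$, exploiting the fact that a nontrivial finite $p$-group has nontrivial center and hence contains a central subgroup $Z$ of order $p$.

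I would split the inductive step according to the position of $Z$ relative to $\langle a\rangle$. If $Z\cap\langle a\rangle=1$, then the image $\bar a$ of $a$ in $G/Z$ still has order $p^n$, so the inductive hypothesis applied to $G/Z$ furnishes $\bar N\unlhd G/Z$ with $\bar N\cap\langle\bar a\rangle=\langle\bar a^{p^i}\rangle$; pulling $\bar N$ back to $N\unlhd G$ (with $Z\le N$) and using $Z\cap\langle a\rangle=1$ to identify $\langle a\rangle$ with $\langle\bar a\rangle$ yields $N\cap\langle a\rangle=\langle a^{p^i}\rangle$. If instead $Z\cap\langle a\rangle\neq 1$, then, $Z$ having order $p$, we must have $Z=\langle a^{p^{n-1}}\rangle$, the unique subgroup of order $p$ in the cyclic group $\langle a\rangle$; consequently $\bar a$ has order $p^{n-1}$, and since $i\le n-1$ the inductive hypothesis again supplies a suitable $\bar N$, which I pull back to $N$.

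The step requiring the most care, and the one I regard as the main obstacle, is verifying in this second case that the pullback $N$ really satisfies $N\cap\langle a\rangle=\langle a^{p^i}\rangle$, i.e., that killing $Z$ does not accidentally reduce the order of the image of $a$ below $p^i$. The point is that $Z=\langle a^{p^{n-1}}\rangle$ is contained in $\langle a^{p^i}\rangle$ because $i\le n-1$, so the full preimage in $\langle a\rangle$ of $\langle\bar a^{p^i}\rangle$ under the quotient map $\langle a\rangle\twoheadrightarrow\langle\bar a\rangle$ is exactly $\langle a^{p^i}\rangle$; combined with the observation that $a^k\in N\cap\langle a\rangle$ if and only if $\bar a^k\in\bar N\cap\langle\bar a\rangle$, this gives the desired equality and closes the induction.
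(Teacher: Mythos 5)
Your proof is correct and takes essentially the same route as the paper's: induction on $|G|$, passing to the quotient by a central subgroup $Z$ of order $p$ and applying the inductive hypothesis there. The paper's version is just slightly more compact -- it skips your reformulation and case split by observing directly that the image of $a$ in $G/Z$ has order $p^{n-1}$ or $p^n$, so induction yields every $i\le n-1$, with $i=n$ handled by $G$ itself -- but both of your cases are verified correctly, so the arguments are in substance the same.
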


\begin{proof} Let $|G| =p^m$ and proceed by induction on $m$. The cases $m=0,1$ are vacuously true. Assume $m>1$. Then $G$ contains a central subgroup $N$ of order $p$. In the factor group $G/N$ the order of $Na$ is either $p^{n-1}$ or $p^n$. Hence, by the inductive hypothesis, for every $i=0, \cdots , n-1$, $G/N$ has a quotient in which the image of $Na$ has order $p^i$. Therefore, the conclusion holds.
\end{proof}

Equipped with the above lemma, we are ready to prove Lemma 3.6.

\begin{proof}[Proof of Lemma 3.6.]  First we consider the case where $a$ has infinite order. Assume $n\in \mathbb N$. The residual property of $G$ allows us to find $M_n\unlhd_{o(p)} G$ such that the order of the image of $a$ in $G/M_n$ is greater than $p^n$. Applying the above lemma to the factor group $G/M_n$, we can find $N_n \unlhd_{o(p)} G$ such that the order of the image of $a$ in $G/N_n$ is exactly $p^n$. Thus the subgroup $N_n$ has the desired property.

Now assume $a$ has finite order, i.e., $|a|=p^k$ for some $k\geq 0$. Then, appealing to the residual property of $G$, we can find $N\unlhd_{o(p)} G$ such that the order of $Na$ in $G/N$ is exactly $p^k$. Now for $n\geq k$ we let $N_n=N$.  Moreover, for $0\leq n<k$ we apply Lemma 3.7 to $G/N$, obtaining $N_n\unlhd_{o(p)} G$ such that the order of the image of $a$ in $G/N_n$ is exactly~$p^n$. The subgroups $N_n$, then, enjoy the properties we seek. 
\end{proof}

The following property of residually $p$-finite groups is an immediate consequence of Lemma 3.6.

\begin{lemma} If $G$ is a residually $p$-finite group, then every cyclic subgroup of $G$ is topologically $p$-embedded in $G$. 
\end{lemma}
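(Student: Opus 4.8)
The plan is to verify condition (ii) of the characterization of topological $p$-embedding recorded in the preliminary section, using Lemma 3.6 essentially verbatim. Fix $a\in G$ and set $H=\langle a\rangle$. According to (ii), it suffices to show that for every $U\unlhd_{o(p)} H$ there exists $M\unlhd_{o(p)} G$ with $M\cap H\leq U$.

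The one preliminary point I would settle first is the structure of the open normal subgroups of the cyclic group $H$ in its own pro-$p$ topology. Here I would use that $G$ is residually $p$-finite to rule out torsion of order prime to $p$: if some power of $a$ were a nontrivial element whose order is divisible by a prime $q\neq p$, it would map to the identity in every $p$-finite quotient of $G$, contradicting the residual hypothesis. Hence $a$ is either of infinite order or of order $p^{k}$ for some $k\geq 0$. In either case the subgroups of $H$ of $p$-power index are exactly the subgroups $\langle a^{p^{n}}\rangle$, $n\geq 0$, so every $U\unlhd_{o(p)} H$ has the form $U=\langle a^{p^{n}}\rangle$.

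With this in hand the argument is immediate: given $U=\langle a^{p^{n}}\rangle$, Lemma 3.6 furnishes $N_{n}\unlhd_{o(p)} G$ with $N_{n}\cap\langle a\rangle=\langle a^{p^{n}}\rangle=U$, so taking $M=N_{n}$ gives $M\cap H\leq U$. This verifies (ii), and therefore $\langle a\rangle\leq_{t(p)} G$.

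The substantive work has already been carried out in Lemma 3.6, so I anticipate no serious obstacle. The only step demanding a little attention is the preliminary identification of the open subgroups of $H$, and in particular the observation that finite-order elements of a residually $p$-finite group must have $p$-power order. That observation is precisely what guarantees that the open normal subgroups of $H$ are exactly the groups $\langle a^{p^{n}}\rangle$ to which Lemma 3.6 applies on the nose.
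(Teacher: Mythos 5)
Your proof is correct and follows exactly the route the paper intends: the paper states this lemma without proof as an ``immediate consequence of Lemma 3.6,'' and your argument is precisely that deduction, spelled out. Your preliminary observation---that residual $p$-finiteness forces any torsion in $\langle a\rangle$ to have $p$-power order, so the open normal subgroups of $\langle a\rangle$ are exactly the $\langle a^{p^n}\rangle$---is the right detail to check and is handled correctly.
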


In addition, Lemma 3.6 allows us to prove that, in free products with cyclic amalgamation, if both factors are residually $p$-finite, then they are each topologically $p$-embedded.  

\begin{lemma} Assume $G_1$ and $G_2$ are residually $p$-finite groups with a common cyclic subgroup $A$. Then $G_1$ and $G_2$ are both topologically $p$-embedded in $G_1\ast_A G_2$.
\end{lemma}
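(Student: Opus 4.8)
The plan is to reduce everything to the criterion of Lemma 3.3, which will then hand us the conclusion directly. Write $A=\langle a\rangle$. According to that lemma, it suffices to show that for every pair $\{N_1,N_2\}$ with $N_i\unlhd_{o(p)} G_i$ there is a pair $\{P_1,P_2\}$ with $P_i\unlhd_{o(p)} G_i$, $P_i\leq N_i$, and $P_1\cap A=P_2\cap A$. So I would fix such $N_1$ and $N_2$ and produce the $P_i$ by shrinking each $N_i$ just enough to force the two intersections with $A$ to coincide exactly.

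First I would pin down the possible shapes of $N_i\cap A$. Since $A/(N_i\cap A)$ embeds in the finite $p$-group $G_i/N_i$, the subgroup $N_i\cap A$ has $p$-power index in the cyclic group $A$, so $N_i\cap A=\langle a^{p^{j_i}}\rangle$ for some $j_i\geq 0$. Setting $n=\max\{j_1,j_2\}$, we then have $\langle a^{p^{n}}\rangle\subseteq N_i\cap A$ for $i=1,2$. Next I would invoke Lemma 3.6 in each factor to realize this common target subgroup as an actual intersection with $A$: it yields $M_i\unlhd_{o(p)} G_i$ with $M_i\cap A=\langle a^{p^{n}}\rangle$. Taking $P_i=N_i\cap M_i$ gives $P_i\unlhd_{o(p)} G_i$ and $P_i\leq N_i$, while
$$P_i\cap A=(N_i\cap A)\cap(M_i\cap A)=(N_i\cap A)\cap\langle a^{p^{n}}\rangle=\langle a^{p^{n}}\rangle,$$
the last equality holding because $\langle a^{p^{n}}\rangle\subseteq N_i\cap A$. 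Hence $P_1\cap A=P_2\cap A$, the hypothesis of Lemma 3.3 is satisfied, and that lemma finishes the argument.

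The one step that needs genuine care, and the main obstacle, is the claim that $N_i\cap A$ always has the form $\langle a^{p^{j_i}}\rangle$, since this is precisely where the cyclicity of $A$ and the residual $p$-finiteness of the $G_i$ enter. If $a$ has infinite order then $A\cong\mathbb Z$, whose subgroups of $p$-power index are exactly the $\langle a^{p^{j}}\rangle$, so nothing is lost. If $a$ has finite order, I must first rule out the possibility that its order has a prime divisor other than $p$: were some power $a^{p^{s}}$ a nontrivial element of order coprime to $p$, it would map to the identity in every $p$-finite quotient of $G_i$, contradicting residual $p$-finiteness; hence $|a|$ is a power of $p$ and again every subgroup of $A$ is of the stated form. (Equivalently, one may simply observe that $A\leq_{t(p)} G_i$ by Lemma 3.9, so that $N_i\cap A$ is open in the pro-$p$ topology on $A$ and therefore contains some basic neighbourhood $\langle a^{p^{j_i}}\rangle$.) Once this normal form is secured, the remainder is the routine bookkeeping with open normal subgroups sketched above.
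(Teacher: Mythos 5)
Your proof is correct and follows essentially the same route as the paper's: reduce to the criterion of Lemma 3.3, apply Lemma 3.6 in each factor to obtain $M_i\unlhd_{o(p)} G_i$ with $M_i\cap A=\langle a^{p^{n}}\rangle=A\cap N_1\cap N_2$, and take $P_i=N_i\cap M_i$. Your added verification that $N_i\cap A$ has the form $\langle a^{p^{j_i}}\rangle$ (including that torsion in a residually $p$-finite group must have $p$-power order) fills in a detail the paper leaves implicit; the only slip is that your parenthetical alternative should cite Lemma 3.8 rather than Lemma 3.9, since the latter is the statement being proved.
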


\begin{proof} We need to show that, for each pair $\{N_1,N_2\}$ with $N_i\unlhd_{o(p)} G_i$, there exists a pair $\{P_1,P_2\}$ such that $P_i\unlhd_{o(p)} G_i$, $P_i\leq N_i$ and $P_1\cap A=P_2\cap A$. Suppose  $N_1\unlhd_{o(p)} G_1$ and $N_2\unlhd_{o(p)} G_2$. By Lemma 3.6,  for each $i=1, 2$, there exists $M_i\unlhd_{o(p)} G_i$ such that $$M_i\cap A = A\cap N_1\cap N_2. $$ Now, if we take $P_i=M_i\cap N_i$, then the pair $\{P_1,P_2\}$ has the desired properties.
\end{proof}

In conjunction with Corollary 3.2, the above lemma yields immediately the following theorem.

\begin{theorem} Assume $G_1$ and $G_2$ are groups in $\mathcal{C}$ with a common cyclic subgroup $A$.  Suppose, further, that $G_1$ and $G_2$ are residually $p$-finite for each prime $p$. Then $G_1\ast_A G_2$ is also in $\mathcal{C}$.
\end{theorem}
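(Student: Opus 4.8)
The plan is to deduce the theorem directly from Corollary 3.2, applied with the common subgroup $H$ taken to be the cyclic amalgam $A$. Corollary 3.2 already encapsulates the comparison of the pro-$p$ and discrete Mayer--Vietoris sequences, so the entire task reduces to checking its three standing hypotheses: that $A$ is topologically $p$-embedded in each of $G_1$ and $G_2$ for every prime $p$; that $G_1$ and $G_2$ are each topologically $p$-embedded in $G=G_1\ast_A G_2$ for every $p$; and that $G_1$, $G_2$ and $A$ all lie in $\mathcal{C}$.

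I would dispatch these in turn. The membership in $\mathcal{C}$ is immediate: $G_1$ and $G_2$ are in $\mathcal{C}$ by hypothesis, while $A$, being cyclic, lies in $\mathcal{C}$ by the observation recorded in the proof of Corollary 1.3. That $A$ is topologically $p$-embedded in each factor for every $p$ is exactly Lemma 3.8, which applies because each $G_i$ is residually $p$-finite for every $p$ and $A$ is cyclic. Finally, that $G_1$ and $G_2$ are topologically $p$-embedded in $G_1\ast_A G_2$ for every $p$ is precisely the conclusion of Lemma 3.9, whose hypotheses---residual $p$-finiteness of the factors and cyclicity of $A$---are among our standing assumptions. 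With all three conditions verified, Corollary 3.2 forces $G_1\ast_A G_2$ to belong to $\mathcal{C}$, which is the assertion.

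At the level of this theorem there is no genuine obstacle: as the text preceding the statement indicates, it follows immediately once Corollary 3.2 and the embedding lemmas are in place. The substantive difficulty has already been absorbed into Lemma 3.9, where, for a prescribed pair $\{N_1,N_2\}$ of open normal subgroups, one must manufacture a pair $\{P_1,P_2\}$ with $P_i\unlhd_{o(p)}G_i$ and $P_i\leq N_i$ whose traces on the amalgam agree, $P_1\cap A=P_2\cap A$. This matching pair is engineered using the fine control over a single cyclic subgroup furnished by Lemma 3.6, and the passage from such a pair to an actual topological $p$-embedding (through Lemma 3.3) leans on Higman's Theorem 3.4. Both ingredients are special to the cyclic setting, which is exactly why the cyclicity of $A$ is indispensable here and cannot be relaxed to an arbitrary common subgroup.
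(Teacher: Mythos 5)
Your proposal is correct and follows exactly the paper's route: the paper likewise deduces the theorem immediately from Corollary 3.2, using Lemma 3.8 to topologically $p$-embed the cyclic amalgam $A$ in each factor and Lemma 3.9 to topologically $p$-embed $G_1$ and $G_2$ in $G_1\ast_A G_2$, with $A\in\mathcal{C}$ because it is cyclic. Your closing remarks about where the real work lives (Lemmas 3.3, 3.6, 3.9 and Higman's theorem) accurately reflect the structure of the paper's argument.
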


To illuminate the significance of Theorem 3.10, we list several examples of groups that satisfy the hypotheses, i.e., that are residually $p$-finite for every prime $p$ and in $\mathcal{C}$:
\vspace{5pt}

1. Free groups.
\vspace{5pt}

2. Finitely generated torsion-free nilpotent groups.
\vspace{5pt}

3. Right-angled Artin groups.        
\vspace{5pt}

4. A free product of groups of type 1 or 2 with maximal cyclic amalgam. (That such a product is residually $p$-finite for every prime $p$ is established in \cite{kim}.)
\vspace{5pt}

5. Any group $G$ with a central subgroup $A$ such that $A$ is free abelian of finite rank and $G/A$ is both in $\mathcal{C}$ and residually $p$-finite for every prime $p$.
\vspace{5pt}

The last observation follows from Proposition 1.1. To see this, let $Q=G/A$ and consider the short exact sequence $\hat{A}^p\rightarrowtail \hat{G}^p\twoheadrightarrow \hat{Q}^p$ guaranteed by the proposition. Since the completion maps $c^p_A$ and $c^p_Q$ are injections, it follows that $c_G^p$ is also injective, implying that $G$ is residually $p$-finite. Moreover, $G$ is in $\mathcal{C}$ by virtue of Corollaries 1.2 and 1.3.  

In conclusion, we present the following example illustrating that, in both Theorems 3.5 and 3.10, we cannot dispense with the conditions placed on the amalgam. The pro-$p$ completions of the groups involved in our example are the groups adduced by L. Ribes [{\bf 14}, Section 4] (also  [{\bf 13}, Example 9.2.9]) as yielding an instance of a pro-$p$ free product with amalgam that is not proper. 

\begin{example}{\rm Set $N=\mathbb Z\oplus \mathbb Z$. Take $G_1$ to be the semidirect product of $N$ with $\mathbb Z$, where $1$ induces the automorphism $(a,b)\mapsto (a+b,b)$ of $N$.  Furthermore, let $G_2$ be the semidirect product of $N$ with $\mathbb Z$, where $1$ induces the automorphism $(a,b)\mapsto (a,a+b)$ of $N$. Then the isomorphic groups $G_1$ and $G_2$ are finitely generated torsion-free nilpotent groups of class 2. 

Let $G=G_1\ast_N G_2$. Our intention is to prove that $H_{\rm cont}^2(\hat{G}^p,\mathbb Z/p)=0$ whereas $H^2(G,\mathbb Z/p)\neq~0$, thereby establishing that $G$ lies outside of the class $\mathcal{C}$. First we observe that there is a split extension $N\rightarrowtail G\twoheadrightarrow F_2$, where $F_2$ denotes the free group on two generators. We now proceed to argue that the image of $N$ in $\hat{G}^p$ is trivial, which will imply that  $\hat{G}^p\cong \hat{F}_2^p$. We begin with an arbitrary epimorphism $\theta:G\to P$ such that $P$ is $p$-finite.  Set $M=\mbox{Ker}\ \theta \cap N$. Then $M$ is invariant under both the automorphisms $(a,b)\mapsto (a+b,b)$ and $(a,b)\mapsto (a,a+b)$ of $N$. It is not difficult to see that this means that, whenever $(a,b)\in M$, both $(a,0)$ and $(0,b)$ must be elements of $M$. Thus $M=p^n\mathbb Z\oplus p^n\mathbb Z$ for some nonnegative integer $n$. It is apparent from this description of $M$ that $N/M$ fails to contain any nontrivial element that is fixed by the action of $F_2$; in other words, $\theta(N)\cap Z(P)=1$. However, since $P$ is $p$-finite, this implies that $\theta(N)=1$. Therefore, $N$ has a trivial image in $\hat{G}^p$, so that $\hat{G}^p\cong \hat{F}_2^p$. Hence $H_{\rm cont}^2(\hat{G}^p,\mathbb Z/p)=0$.

Finally, we establish that $H^2(G,\mathbb Z/p)\neq 0$. 
For this we require the following segment of the Mayer-Vietoris sequence for $G$.
\begin{displaymath}
 H^2(G,\mathbb Z/p)\to H^2(G_1,\mathbb Z/p)\oplus H^2(G_2,\mathbb Z/p)\to H^2(N,\mathbb Z/p)
\end{displaymath}
Since $H^2(N,\mathbb Z/p)\cong \mathbb Z/p$ and $H^2(G_1, \mathbb Z/p)\cong H^2(G_2, \mathbb Z/p)$, it will follow that $H^2(G, \mathbb Z/p)\neq 0$ if we can show that $H^2(G_1,\mathbb Z/p)\neq 0$. 
To investigate $H^2(G_1, \mathbb Z/p)$, we use the Lyndon-Hochschild-Serre spectral sequence for the extension $N\rightarrowtail G_1\twoheadrightarrow \mathbb Z$. In this spectral sequence, $E^{11}_{\infty}=E^{11}_2\cong H^1(\mathbb Z, \mathbb Z/p \oplus \mathbb Z/p)$, where the action of $\mathbb Z$ on $\mathbb Z/p \oplus \mathbb Z/p$ causes $1$ to induce the automorphism $(a,b)\mapsto (a+b,b)$ of $\mathbb Z/p \oplus \mathbb Z/p$.  Employing the interpretation of the first cohomology group as derivations modulo inner derivations, the latter group is readily seen to be isomorphic to $\mathbb Z/p$. Consequently, $H^2(G_1, \mathbb Z/p)\neq 0$, implying that
$H^2(G,\mathbb Z/p)\neq 0$. }

\end{example}

\begin{acknowledgement}{\rm We are grateful to the anonymous referee for his/her helpful comments.}
\end{acknowledgement}


\begin{thebibliography}{9}


\bibitem[{\bf 1}]{blomer}{\sc I. Blomer, P. Linnell} and {\sc T. Schick}. Galois cohomology of completed link groups. {\it Proc. Amer. Math Soc.} {\bf 136} (2008), 3449-3459.

\bibitem[{\bf 2}]{burillo}{\sc J. Burillo} and {\sc A. Martino}. Quasi-potency and cyclic subgroup separability. {\it J. Algebra} {\bf 298} (2006), 188-207. 

\bibitem[{\bf 3}]{duchamp}{\sc G. Duchamp} and {\sc D. Krob.} The lower central series of the free partially commutative group. {\it Semigroup Forum} {\bf 45} (1992), 385-394.

\bibitem[{\bf 4}]{falk}{\sc M. Falk} and {\sc R. Randell}. The lower central series of a fiber type arrangement. {\it Invent. Math.} {\bf 82} (1985), 77-88.

\bibitem[{\bf 5}]{higman}{\sc G. Higman.} Amalgams of $p$-groups. {\it J. Algebra} {\bf 1} (1964), 301-305.

\bibitem[{\bf 6}]{karrass}{\sc A. Karrass} and {D. Solitar}. Subgroups of HNN groups and groups with one defining relation. {\it Can. J. Math.} {\bf 23} (1971), 627-643.

\bibitem[{\bf 7}]{kim}{\sc G. Kim} and {J. McCarron}. On amalgamated free products of residually $p$-finite groups. {\it J. Algebra} {\bf 162} (1993), 1-11.

\bibitem[{\bf 8}]{kochloukova}{\sc D. Kochloukova} and {\sc P. Zalesskii}. Profinite and pro-$p$ completions of Poincar\'e duality groups of dimension 3. {\it Trans. Amer. Math. Soc.} {\bf 360} (2008), 1927-1949.

\bibitem[{\bf 9}]{labute}{\sc J. Labute.} Alg\`ebres de Lie et pro-$p$-groupes d\'efinis par une seule relation. {\it Invent. Math.} {\bf 4} (1967), 142-158. 

\bibitem[{\bf 10}]{kl}{\sc K. Lorensen}. Groups with the same cohomology as their profinite completions. {\it J. Algebra} {\bf 320} (2008), 1704-1722. 

\bibitem[{\bf 11}]{schick}{\sc P. Linnell} and {\sc T. Schick}. Finite group extensions and the Atiyah conjecture. {\it J. Amer. Math. Soc.} {\bf 20} (2007), 1003-1051.

\bibitem[{\bf 12}]{lyndon}{\sc R. Lyndon} and {\sc P. Schupp.} {\it Combinatorial Group Theory.} (Springer, 1977). 

\bibitem[{\bf 13}]{profinite}{\sc L. Ribes} and {\sc P. Zalesskii}. {\it Profinite Groups} (Springer, 2000).

\bibitem[{\bf 14}]{ribes}{\sc L. Ribes}. On amalgamated products of profinite groups. {\it Math. Z.   } {\bf 123} (1971), 357-364. 

\bibitem[{\bf 15}]{weigel}{\sc T. Weigel}. On profinite groups with finite abelianizations. {\it Sel. Math. New Ser.} {\bf 13} (2007), 175-181.

\bibitem[{\bf 16}]{protrees}{\sc P. Zalesskii} and {\sc V. Melnikov}. Subgroups of profinite groups acting on trees. 
{\it Math. USSR-Sb.} {\bf 63} (1989), 405-424.



\end{thebibliography}
\end{document}